\theoremstyle{plain}
\newtheorem{theorem}{Theorem}
\newtheorem{lemma}[theorem]{Lemma}
\newtheorem{corollary}[theorem]{Corollary}
\newtheorem{proposition}[theorem]{Proposition}
\theoremstyle{definition}
\newtheorem{problem}[theorem]{Problem}
\theoremstyle{remark}
\newcommand{\set}[1]{\left\{ #1 \right\}}
\newcommand{\setdef}[2]{\left\{ #1 \colon\ #2 \right\}}
\title{\bf Panchromatic patterns by paths}
\author{Germ\'an Ben\'itez-Bobadilla\\
\small Instituto de Matem\'aticas\\[-0.8ex]
\small UNAM\\[-0.8ex]
\small Mexico City, Mexico\\
\small\tt gbenitez@matem.unam.mx\\
\and
Hortensia Galeana-S\'anchez\\
\small Instituto de Matem\'aticas\\[-0.8ex]
\small UNAM\\[-0.8ex]
\small Mexico City, Mexico\\
\small\tt hgaleana@matem.unam.mx\\
\and
C\'esar Hern\'andez Cruz\\
\small Departamento de Computaci\'on\\[-0.8ex]
\small Centro de Investigaci\'on y de Estudios Avanzados del IPN\\[-0.8ex]
\small Mexico City, Mexico\\
\small\tt cesar@cs.cinvestav.mx
}
\date{\today \\
\small Mathematics Subject Classifications: 05C20, 05C69}
\begin{document}

\maketitle

\begin{abstract}
Let $H=(V_H,A_H)$ be a digraph, possibly
with loops, and let $D=(V_D, A_D)$ be a
loopless multidigraph with a colouring of
its arcs $c: A_D \rightarrow V_H$. An
$H$-path of $D$ is a path $(v_0, \dots,
v_n)$ of $D$ such that $(c(v_{i-1}, v_i),
c(v_i,v_{i+1}))$ is an arc of $H$
for every $1 \le i \le n-1$. For $u, v
\in V_D$, we say that $u$ reaches $v$ by
$H$-paths if there exists an $H$-path from
$u$ to $v$ in $D$. A subset $S \subseteq
V_D$ is $H$-absorbent of $D$ if every
vertex in $V_D-S$ reaches by $H$-paths
some vertex in $S$, and it is $H$-independent
if no vertex in $S$ can reach another
(different) vertex in $S$ by $H$-pahts.
An $H$-kernel is an independent by $H$-paths
and absorbent by $H$-paths subset of $V_D$.

We define $\tilde{\mathscr{B}}_1$ as the
set of digraphs $H$ such that any
$H$-arc-coloured tournament has an
$H$-absorbent by paths vertex; the set
$\tilde{\mathscr{B}}_2$ consists of the
digraphs $H$ such that any $H$-arc-coloured
digraph $D$ has an independent,
$H$-absorbent by paths set; analogously,
the set $\tilde{\mathscr{B}}_3$ is the set
of digraphs $H$ such that every
$H$-arc-coloured digraph $D$ contains an
$H$-kernel by paths.

In this work, we present a characterization
of $\tilde{\mathscr{B}}_2$, and provide
structural properties of the digraphs in
$\tilde{\mathscr{B}}_3$ which settle up its
characterization except for the analysis of
a single digraph on three vertices.

  \bigskip

  \noindent \textbf{Keywords:} digraph kernel, $H$-kernel, kernel by monochromatic paths, panchromatic pattern.
\end{abstract}

\section{Introduction}

A kernel in a digraph $D$ is a subset $K$
of $V_D$ which is independent and
absorbent\footnote{The set $K$ is absorbent
if for every vertex $u \in V_D - K$ there
exists a vertex $v$ in $K$ such that $(u,v)$
is an arc of $D$.}.   Numerous generalizations
of this concept have been studied in the
literature, but one that has received particular
attention is the notion of a kernel by
monochromatic walks.   Given a digraph $D$
whose arc set is coloured with a set of colours
$C$, we say that a vertex $u$ reaches a vertex
$v$ by monochromatic walks in $D$ if there is a
(directed) walk from $u$ to $v$ such that all its
arcs have the same colour.  With this generalized
reachability concept, the notions of independence
and absorbance by monochromatic walks come
naturally; as one would expect, a subset $S$ of
$V_D$ is independent by monochromatic walks if no
vertex in $S$ can reach another vertex in $S$ by
monochromatic walks, and it is absorbent by
monochromatic walks if every vertex in $V_D - S$
reaches some vertex in $S$ by monochromatic walks.
Evidently, a kernel by monochromatic walks is a
subset $K$ of $V_D$ which is independent and
absorbent by monochromatic walks.   Sands, Sauer
and Woodrow proved in \cite{sandsJCTB33} that
every digraph whose arc set is coloured with two
colours has a kernel by monochromatic walks.

In \cite{linekAC44}, Linek and Sands further
generalized kernels by monochromatic walks by
considering a broader notion of reachability.
Instead of colouring the arcs of the digraph
$D$ with an arbitrary set, they used the vertex
set of another digraph $H$; with this setting,
instead of only considering monochromatic walks,
the arcs of $H$ could be used to codify permitted
colour changes in the walks of $D$ to define
reachability.   Formally, a digraph $H$, possibly
with loops will be called a {\em pattern of
colours}, or {\em pattern} for short, and $D$
is an {\em $H$-arc-coloured digraph} if it is an
irreflexive multidigraph together with a colouring
$c$ of its arcs, $c\colon A_D \to V_H$.   An
{\em $H$-walk} $W$ in $D$ is a walk $W = (x_0,
\dots, x_k)$ such that $(c(x_0, x_1), \dots,
c(x_{k-1}, x_k))$ is a walk in $H$.   For $u,
v \in V_D$, we say that $u$ reaches $v$ by
$H$-walks if there exists an $H$-walk from $u$
to $v$ in $D$.   Naturally, this notion of
reachability allows us to define independence
and absorbance by $H$ walks.   A susbset $S$ of
$V_D$ is independent by $H$-walks if no vertex in
$S$ can reach by $H$-walks another vertex in $S$,
and it is absorbent by $H$-walks if every vertex
in $V_D - S$ can reach by $H$-walks some vertex
in $S$; is $S$ is both independent by $H$-walks
and absorbent by $H$-walks, we say that $S$ is a
kernel by $H$-walks.   Notice that if the only
arcs of $H$ are loops, then the only possible
$H$-walks are precisely monochromatic walks, and
hence, a kernel by $H$-walks is just a kernel by
monochromatic walks.

In this context, Arpin and Linek \cite{arpinDM307}
introduced three interesting classes of patterns.
The family $\mathscr{B}_1$ is the class of patterns
$H$ with the property that for every $H$-arc-coloured
tournament $T$ there exists a vertex which is
$H$-absorbent by $H$-walks. Noticing that every
independent set in a tournament has a single vertex,
we may consider a special subclass of $\mathscr{B}_1$;
the family $\mathscr{B}_2$ which consist of the patterns
$H$ with the property that for every $H$-arc-coloured
digraph $D$ there exists an $H$-absorbent set by walks
which is an independent set.   Further noticing that
every independent by $H$-walks set is independent, the
family $\mathscr{B}_3$ of the patterns $H$ such that
every $H$-arc-coloured digraph contains a kernel by
$H$-walks, seems to be a nice subfamily of
$\mathscr{B}_2$ to consider.   The patterns in
$\mathscr{B}_3$ are known as {\em panchromatic
patterns} (by walks).   In the same article, Arpin
and Linek characterized the class $\mathscr{B}_2$; it
consists of those patterns $H$ with no odd directed
cycle in their complement.

In \cite{galeanaDM339} Galeana-S\'anchez and Strausz
presented a characterization of the class
$\mathscr{B}_3$.   Nonetheless, the authors of the
present work recently found that the proof of a crucial
lemma in the aforementioned work is incorrect, and thus,
the characterization of this class remains as an open
problem.   Galeana-Sánchez and Hern\'andez-Cruz used
this flawed characterization in \cite{galeanaJGT90} to
prove that every pattern $H$ is either a panchromatic
pattern, or the problem of determining whether an
$H$-arc-coloured digraph has a kernel by $H$-walks is
$NP$-complete; luckily, the main argument is robust
enough to remain true even if the family of panchromatic
patterns by $H$-walks turns out to be a family different
from the one proposed in \cite{galeanaDM339}.

All the developements mentioned so far use a notion of
reachability that relies on the concept of $H$-walks.
It is easy to notice that every monochromatic walk contains
a monochromatic path, so it comes to mind to consider
reachability by $H$-paths instead of reachability by
$H$-walks.   Of course, this new notion of reachability
yields new notions of independence, absorbance and kernels,
but also, analogous to the families $\mathscr{B}_i$ might
be defined for $i \in \set{1, 2, 3}$ using reachability by
$H$-paths.   So, the aim of the present work is to study
these concepts, similarities and differences with the
reachability by $H$-walks case.   To avoid the cumbersome
nomenclature, a kernel by $H$-walks will be simply called
an $H$-kernel, and, analogously, $H$-independence
and $H$-absorbance will refer to independence
by $H$-paths and absorbance by $H$-paths,
respectively.

It is worth noticing that reachability problems
in arc-coloured digraphs (and tournaments in
particular) have received a a lot of attention
and many variants are being studied, consider,
for an instance, \cite{beaudouDM342,
bousquetArXiv, delgadoDAM236, hahnDM283}.

We refer to \cite{bang2002} and \cite{bondy2008}
for general concepts.  A digraph is
{\em reflexive} if every vertex has a loop.
Let $H$ be a reflexive pattern. Two different
vertices  $x$ and $y$ of $H$ are {\em true
twins} if $N^+(x) = N^+ (y)$ and $N^-(x) =
N^-(y)$.   Notice that if follows from the
previous definition that, if $x$ and $y$ are
true twins, then we obtain isomorphic patterns
if we delete either of them or we contract
the set $\{ x, y \}$ and delete any parallel
arcs that were created from this contraction.
We will denote this kind of contraction of
the set $\{ x, y \}$ (without parallel arcs)
as $H_{xy}$.   From the previous observation
it is clear that $H_{xy}$ is isomorphic to an
induced subdigraph of $H$.

The rest of the paper is organized as follows.
In Section \ref{sec:basic} some basic results
regarding the differences and similarities between
reachability by $H$-walks and by $H$-paths are
presented, in particular, for a pattern $H$ we
propose constructions to find infinite families
of $H$-coloured digraphs having $H$-kernel but no
kernel by $H$-walks, and vice versa.   Section
\ref{sec:b2} is devoted to prove that the class
$\tilde{\mathscr{B}_2}$, the $H$-path reachability
analogue to $\mathscr{B}_2$, is actually equal to
$\mathscr{B}_2$.   In Section \ref{sec:trans}, we
study the patterns $H$ such that for every
$H$-arc-coloured digraph, every $H$-walk between
two vertices of $D$ contains an $H$-path with
the same endpoints.   Sections \ref{sec:ppp} and
\ref{sec:F4F5} are devoted to panchromatic
patterns by paths; in the former, we analyze
thirteen of the patterns on three vertices, and
in the latter we deal with two of the remaining
three.   In Section \ref{sec:F1} we point out
some properties of the pattern $F_1$, which is
the only one we were unable to decide whether
it is as a panchromatic pattern by paths.
Section \ref{sec:Conc} closes this work by
presenting conclusions and some open problems.

\section{Basic results}
\label{sec:basic}

As we mentioned before, our main goal is
to obtain a development parallel to that
made in \cite{arpinDM307}, but considering
reachability by $H$-paths instead of
reachability by $H$-walks.   Hence, we
start defining the following classes of
digraphs.

\begin{itemize}
    \item Let $\tilde{\mathscr{B}_1}$ the class of patterns $H$
        such that every $H$-arc-colourerd tournament contains
        an $H$-kernel.

    \item Let $\tilde{\mathscr{B}_2}$ the class of patterns $H$
        such that every $H$-arc-coloured digraph $D$, contains
        an independent set of $V_D$ which is also $H$-absorbent.

    \item Let $\tilde{\mathscr{B}_3}$ the class of patterns $H$
        such that for every $H$-arc-coloured digraph $D$ contains
        an $H$-kernel.
\end{itemize}

By simply considering the definitions
it is easy to observe that
$\tilde{\mathscr{B}_3}  \subseteq
\tilde{\mathscr{B}_2}\subseteq
\tilde{\mathscr{B}_1}$.   Notice that
if $H$ is a complete reflexive digraph,
then $H \in \tilde{\mathscr{B}_3}$.
Sands, Sauer and Woodrow proved in
\cite{sandsJCTB33} that if $H$ consists
of two reflexive isolated vertices,
then $H \in \tilde{\mathscr{B}_3}$,
hence $\tilde{\mathscr{B}_i}$ is nonempty
for $i \in \set{1, 2, 3}$.   Naturally,
we are interested in a characterization
of these classes; our following result states
some simple observations about them.

\begin{lemma}\label{prop Bi}
Let $H$ be a pattern.
\begin{enumerate}
    \item If $H \in \tilde{\mathscr{B}}_i$,
          then every vertex of $H$ is reflexive,
          for $i \in \set{1, 2, 3}$.

    \item If $H \in \tilde{\mathscr{B}}_i$
          and $H_1$ is an induced subdigraph
          of $H$ then $H_1 \in
          \tilde{\mathscr{B}}_i$, for $i
          \in \set{1, 2, 3}$.

    \item If $H \in \tilde{\mathscr{B}}_i$
          and $H_1$ is a spanning
          superdigraph of $H$, then $H_1 \in
          \tilde{\mathscr{B}}_i$, for $i \in
          \set{1, 2}$.
\end{enumerate}
\end{lemma}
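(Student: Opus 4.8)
The three parts are of increasing subtlety, and for part 1 it is enough to treat $i=1$: since $\tilde{\mathscr{B}}_3 \subseteq \tilde{\mathscr{B}}_2 \subseteq \tilde{\mathscr{B}}_1$, once we know that membership in $\tilde{\mathscr{B}}_1$ forces reflexivity, the same follows for the two smaller classes. So suppose some vertex $\alpha \in V_H$ has no loop, and let $D$ be the directed triangle $C_3$ with all three arcs coloured $\alpha$. Any path of length at least two in $D$ has an internal vertex $v_i$ with $(c(v_{i-1},v_i),c(v_i,v_{i+1})) = (\alpha,\alpha) \notin A_H$, so no such path is an $H$-path; hence in $D$ a vertex $u$ reaches a vertex $v$ by $H$-paths exactly when $u = v$ or $(u,v) \in A_D$. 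Consequently an $H$-kernel of $D$ is precisely an ordinary kernel of $C_3$, and $C_3$ has none. Thus $H \notin \tilde{\mathscr{B}}_1$, proving part 1.

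For part 2, let $H_1$ be an induced subdigraph of $H$ and let $D$ be any $H_1$-arc-coloured digraph (a tournament, for $i=1$), with colouring $c\colon A_D \to V_{H_1}$. Since $V_{H_1} \subseteq V_H$, the same $c$ is an $H$-colouring of $D$. The key point is that, because $H_1$ is \emph{induced} and $c$ takes values only in $V_{H_1}$, a path $(x_0,\dots,x_n)$ of $D$ is an $H$-path if and only if it is an $H_1$-path: each required condition $(c(x_{i-1},x_i),c(x_i,x_{i+1})) \in A_H$ with both coordinates in $V_{H_1}$ is equivalent to the corresponding condition in $A_{H_1}$. Hence reachability by $H$-paths and by $H_1$-paths coincide on $D$, so an $H$-kernel of $D$ (viewed as $H$-coloured) is an $H_1$-kernel of $D$, and likewise for independent $H$-absorbent sets and for $H$-absorbent vertices. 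Applying $H \in \tilde{\mathscr{B}}_i$ to $D$ regarded as $H$-coloured yields the desired object for $D$ as an $H_1$-coloured digraph, so $H_1 \in \tilde{\mathscr{B}}_i$.

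For part 3, let $H_1$ be a spanning superdigraph of $H$, so $V_{H_1} = V_H$ and $A_H \subseteq A_{H_1}$, and let $D$ be $H_1$-arc-coloured; again $c$ is automatically an $H$-colouring of $D$. Now the relevant asymmetry is that every $H$-path of $D$ is an $H_1$-path (more arcs in the pattern can only permit more colour changes), hence every $H$-absorbent set is $H_1$-absorbent; but $H$-independence need \emph{not} imply $H_1$-independence. For $i = 2$: let $S$ be the independent, $H$-absorbent set provided by $H \in \tilde{\mathscr{B}}_2$ applied to $D$ as $H$-coloured. Ordinary independence of $S$ does not mention the pattern at all, and $S$ is $H_1$-absorbent by the observation above, so $S$ witnesses $H_1 \in \tilde{\mathscr{B}}_2$. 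For $i = 1$: $D$ is a tournament, so take the $H$-kernel $K$ of $D$ as $H$-coloured. Since a single arc is always an $H$-path, $H$-independence of $K$ implies ordinary independence, whence $|K| \le 1$ in a tournament; a set with at most one vertex is trivially $H_1$-independent, and $K$ is $H_1$-absorbent, so $K$ is an $H_1$-kernel.

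The only genuine difficulty is understanding why part 3 is restricted to $i \in \{1,2\}$: for general digraphs and $i = 3$ one would need the $H$-kernel to remain $H_1$-independent, and adding arcs to the pattern can create an $H_1$-path between two of its vertices where previously there was none, so the argument must exploit either that ordinary independence is pattern-free (case $i=2$) or that tournaments admit only trivial independent sets (case $i=1$); no analogous escape is available for $\tilde{\mathscr{B}}_3$.
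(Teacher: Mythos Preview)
Your proof is correct and follows essentially the same approach as the paper's: the same $C_3$ example for part~1, the observation that an $H_1$-colouring is an $H$-colouring with identical reachability (by inducedness) for part~2, and the inclusion of $H$-paths among $H_1$-paths for part~3. Your write-up is considerably more detailed than the paper's terse version---in particular, you make explicit why inducedness is needed in part~2 and why part~3 fails for $i=3$---but the underlying arguments coincide.
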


\begin{proof}
For the first item, consider a pattern
$H$ with a vertex $x$ such that $(x,x)
\notin A_H$, then the $H$-arc-coloured
digraph $D$ obtained by colouring each
arc of $C_3$ with $x$ does not have an
$H$-kernel, and thus, $H \notin
\tilde{\mathscr{B}}_i$ for $i \in \set{
1, 2, 3}$.

For (2), since every $H_1$-arc-colouring
is also an $H$-arc-colouring, then any
example showing $H_1 \notin
\tilde{\mathscr{B}}_i$ also shows that
$H \notin \tilde{\mathscr{B}}_i$, for
$i \in \set{1, 2, 3}$.

For (3) we have that any $H$-path is also
an $H_1$-path, thus $H_1 \in
\tilde{\mathscr{B}}_i$, for $i \in \set{
1, 2}$.
\end{proof}

Since the existence of an $H$-walk between
two vertices does not guarantee the existence
of an $H$-path between those vertices and
the concatenation of two $H$-paths is not
always an $H$-walk, we claim that if $D$
has a kernel by $H$-walks, then $D$ not
necessarily has an $H$-kernel, as the
example in Figure \ref{Fig1} shows. In
Figure \ref{Fig1} we have that $\set{w}$
is a kernel by $H$-walks of $D$, because
$(x, y, z, y, w)$ is a spanning $H$-walk
in $D$ that finishes in $w$. It is easy
to check that $D$ has no $H$-kernel
(notice that every $H$-independent set
by paths of $D$ has cardinality one).

\begin{figure}[!htb]
\begin{center}
\includegraphics[scale=.35]{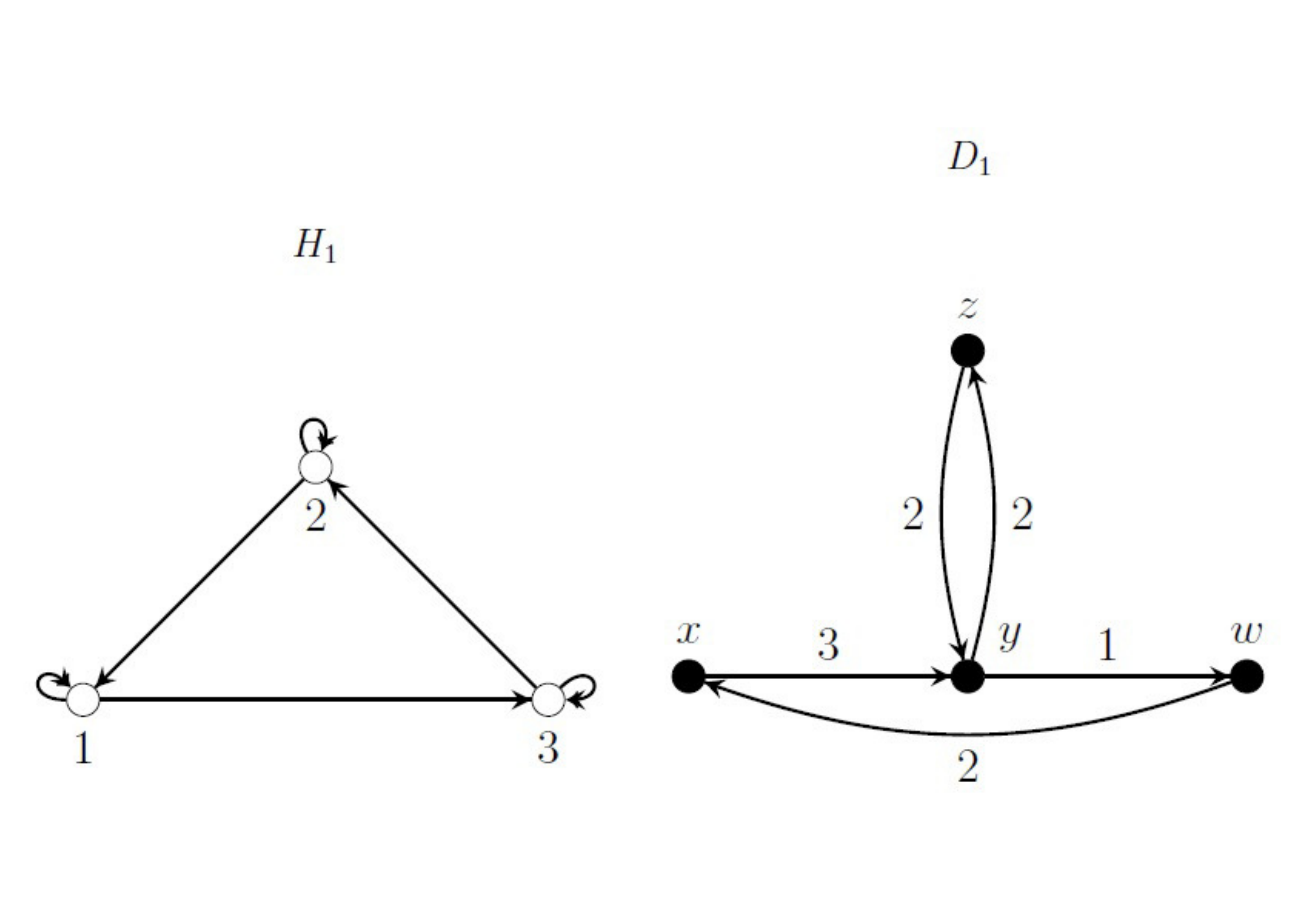}
\caption{The set $\set{w}$ is a kernel
by $H$-walks of $D$ and $D$ has no
$H$-kernel.}
\label{Fig1}
\end{center}
\end{figure}

We also claim that if $D$ has an
$H$-kernel, then $D$ not necessarily
has a kernel by $H$-walks as the example
in Figure \ref{Fig2} shows. In Figure
\ref{Fig2} we have that $\set{u, v}$
is an $H$-kernel of $D$. It is easy
to see that $D$ has no $H$-kernel by
walks (notice that every independent
by $H$-walks set in $D$ has cardinality
one because $(u, y, z, y, v)$ is an
$H$-walk between in $D$).

\begin{figure}[!htb]
\begin{center}
\includegraphics[scale=.35]{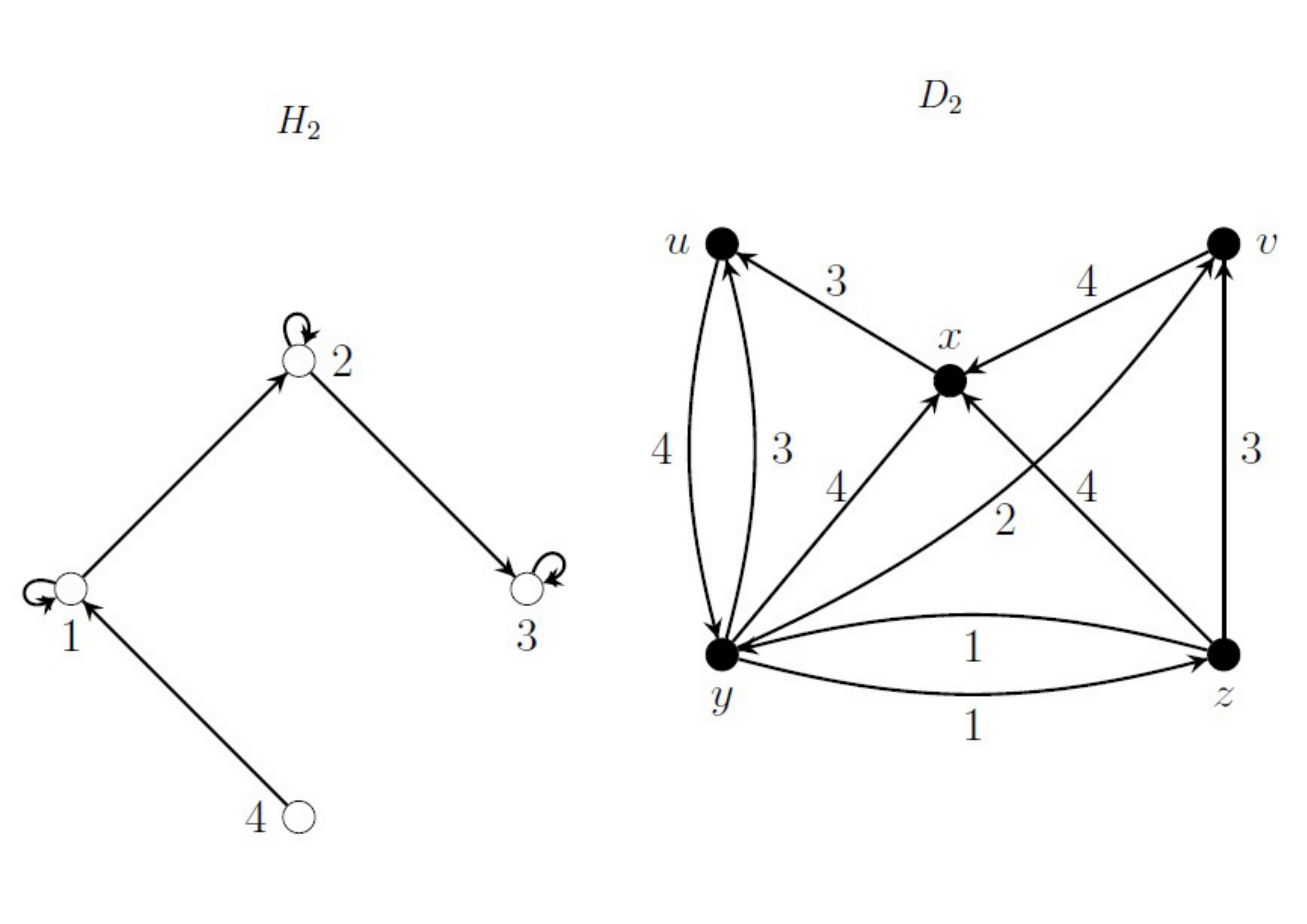}
\caption{$\set{u, v}$ is an $H$-kernel
of $D$ and $D$ has no kernel by $H$-walks.}
\label{Fig2}
\end{center}
\end{figure}

To convince the reader that these are not
isolated cases, we now exhibit inifinite
families with the same behaviour as the
previous two examples.

Given two multidigraphs $D_1$ and $D_2$,
we define the multidigraph $D_1\bullet D_2$,
which we call the {\em linear sum} of $D_1$
and $D_2$, to have vertex set $V_{D_1 \bullet
D_2} = V_{D_1} \cup V_{D_2}$ (disjoint union)
and arc set $A_{D_1 \bullet D_2} = A_{D_1}
\cup A_{D_2} \cup V_{D_1} \times V_{D_2}$.

Let $H$ be a pattern, and let $D_1$ and $D_2$
be two $H$-arc-coloured digraphs with
colourings $c_1$ and $c_2$, respectively.
Consider the linear sum of $D_1$ and $D_2$.
We define an $H$-arc-colouring $c$ of $D_1
\bullet D_2$ as follows, $c : A_{D_1 \bullet
D_2} \to V_H$ is such that \[
c(a)=
\left\{ \begin{array}{ll}
 c_1(a) & \text{ if }a\in A_{D_1} \\
 c_2(a) & \text{ if }a\in A_{D_2} \\
 c_0    & \text{ if } a = (u, v) \text{ with }
          u \in V_{D_1} \text{ and } v \in
          V_{D_2} \text{ where } c_0
          \text{ may or not be in } H.
\end{array}
\right.
\]
Based on the previous $H$-arc-colouring
we can obtain the following lemmas.

\begin{lemma}
\label{sumPathsNoWalks}
Let $H$ be a pattern.  If $D_1$ and $D_2$ are
$H$-arc-coloured digraphs, then $D_1\bullet D_2$
has an $H$-kernel and has no kernel by $H$-walks
if and only if $D_2$ has $H$-kernel and has no
kernel by $H$-walks.
\end{lemma}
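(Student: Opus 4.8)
The plan is to exploit the rigid structure of the linear sum. Two structural observations do all the work. First, the only arcs of $D_1 \bullet D_2$ joining the two parts are directed from $V_{D_1}$ to $V_{D_2}$; consequently every walk (in particular every path) of $D_1 \bullet D_2$ that starts at a vertex of $V_{D_2}$ stays inside $D_2$, and dually every walk that ends at a vertex of $V_{D_1}$ lies entirely in $D_1$. Second, a walk of length one is vacuously both an $H$-walk and an $H$-path (the defining conditions range over internal vertices, of which there are none), so for each $u \in V_{D_1}$ and each $v \in V_{D_2}$ the arc $(u,v)$, coloured $c_0$, is simultaneously an $H$-path and an $H$-walk from $u$ to $v$, regardless of whether $c_0$ appears elsewhere in the colouring. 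Throughout I may assume $V_{D_1}, V_{D_2} \neq \emptyset$, since the empty digraph possesses both an $H$-kernel and a kernel by $H$-walks (namely $\emptyset$), which makes both sides of the equivalence degenerate.

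For the ``if'' direction, assume $D_2$ has an $H$-kernel $K_2$ and no kernel by $H$-walks. First I would verify that $K_2$ is an $H$-kernel of $D_1 \bullet D_2$: it is $H$-independent because any $H$-path between two vertices of $K_2 \subseteq V_{D_2}$ stays in $D_2$, hence would be an $H$-path of $D_2$ contradicting the $H$-independence of $K_2$; it is $H$-absorbent because a vertex of $V_{D_2} \setminus K_2$ already reaches $K_2$ inside $D_2$, while a vertex $u \in V_{D_1}$ reaches any fixed $k \in K_2$ (note $K_2 \neq \emptyset$) via the single arc $(u,k)$. Next, suppose for contradiction that $D_1 \bullet D_2$ has a kernel by $H$-walks $K$, and put $K' = K \cap V_{D_2}$. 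Then $K'$ is independent by $H$-walks in $D_2$ since $H$-walks between vertices of $V_{D_2}$ are confined to $D_2$; and $K'$ is absorbent by $H$-walks in $D_2$ since a vertex $v \in V_{D_2} \setminus K'$ lies outside $K$, so some $H$-walk of $D_1 \bullet D_2$ runs from $v$ to $K$, and this walk cannot leave $V_{D_2}$, hence reaches a vertex of $K \cap V_{D_2} = K'$. Thus $K'$ is a kernel by $H$-walks of $D_2$, a contradiction.

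For the ``only if'' direction, assume $D_1 \bullet D_2$ has an $H$-kernel and no kernel by $H$-walks. Given an $H$-kernel $K$ of $D_1 \bullet D_2$, I would show $K' = K \cap V_{D_2}$ is an $H$-kernel of $D_2$ by repeating the two arguments above with ``$H$-walk'' replaced by ``$H$-path''; here $K' \neq \emptyset$ because each vertex of $V_{D_2}$ either lies in $K$ or reaches $K$ by an $H$-path that never leaves $V_{D_2}$. Conversely, if $D_2$ had a kernel by $H$-walks $K_2$, then the argument already used in the ``if'' direction (independence stays in $D_2$; vertices of $V_{D_1}$ are absorbed through the single arcs into $K_2$) shows $K_2$ is a kernel by $H$-walks of $D_1 \bullet D_2$, contradicting the hypothesis. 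This completes the equivalence.

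This is essentially a bookkeeping argument resting on the two structural observations, so I do not anticipate a serious obstacle; the points needing care are (a) checking that walks and paths genuinely cannot escape $V_{D_2}$, so that restriction to $V_{D_2}$ is well behaved in all four sub-implications, (b) confirming the length-one observation against the formal definitions of $H$-path and $H$-walk so that the $c_0$-coloured arcs do their job independently of $c_0$, and (c) dispatching the empty cases so that the kernels produced are genuinely nonempty exactly where that is used.
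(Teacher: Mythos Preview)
Your proposal is correct and follows essentially the same approach as the paper: both arguments rest on the two structural facts you isolate (walks starting in $V_{D_2}$ stay there, and single arcs are automatically $H$-paths/$H$-walks), and both handle the four sub-implications in the same way. The only cosmetic difference is that where you intersect a kernel $K$ of $D_1\bullet D_2$ with $V_{D_2}$, the paper instead observes directly that $K\subseteq V_{D_2}$ (your absorbance argument already forces $K\cap V_{D_1}=\varnothing$, so the two formulations coincide); one small caveat is that your dismissal of the empty cases is slightly imprecise---if $V_{D_2}=\varnothing$ then the right-hand side is automatically false while the left-hand side need not be---but the paper does not address this edge case either, and it is irrelevant to the intended use of the lemma.
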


\begin{proof}
Suppose first that $D_1 \bullet D_2$ has an
$H$-kernel. We will show that $D_2$ has an
$H$-kernel and has no kernel by $H$-walks.
Let $K$ be an $H$-kernel of $D_1 \bullet
D_2$. We will prove that $K$ is an
$H$-kernel of $D_2$.

Since every vertex in $V_{D_1}$ dominates
every vertex in $V_{D_2}$, it is not hard
to notice that $K \subseteq V_{D_2}$. On
the other hand, by the definition of linear
sum and its associated colouring $c$, it
follows that any $H$-path in $D_1 \bullet
D_2$ between two vertices $x$ and $y$ in
$V_{D_2}$, is also an $H$-path in $D_2$.
Hence, since $K$ is an $H$-kernel of $D_1
\bullet D_2$, then $K$ is $H$-kernel of
$D_2$.

We will prove that if $D_1 \bullet D_2$
does not have a kernel by $H$-walks,
then neither does $D_2$ by contrapositive.
Again, by the definition of $D_1 \bullet
D_2$, every vertex in $V_{D_1}$ dominates
every vertex in $V_{D_2}$ and no vertex in
$V_{D_2}$ dominates a vertex in $V_{D_1}$.
Also, the definition of $c$ guarantees
that an $H$-walk in $D_2$ is also an
$H$-path in $D_1 \bullet D_2$.   It
follows that a kernel by $H$-walks in
$D_2$ is also kernel by $H$-walks in $D_1
\bullet D_2$.

Analogously, an $H$-kernel of $D_2$ is
also an $H$-kernel of $D_1 \bullet D_2$.
So, proceeding again by contrapositive,
assume that $D_1 \bullet D_2$ has a
kernel by $H$-walks $K$.   Again, by
the structure of $D_1 \bullet D_2$, it
must be the case that $K \subseteq
V_{D_2}$.   Also, every $H$-walk in
$D_1 \bullet D_2$ between vertices of
$D_2$ is an $H$-walk in $D_2$.   Thus,
$K$ is a kernel by $H$-walks in $D_2$.
\end{proof}

The proof of our following lemma is
analogous to the proof of Lemma
\ref{sumPathsNoWalks}, so we omit it.

\begin{lemma}
\label{sumWalksNoPaths}
Let $H$ be a pattern.   If $D_1$ and $D_2$ are
$H$-arc-coloured digraphs, then $D_1 \bullet
D_2$ has a kernel by $H$-walks and has no
$H$-kernel if and only if $D_2$ has a kernel
by $H$-walks and has no $H$-kernel.
\end{lemma}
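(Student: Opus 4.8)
The plan is to mirror the proof of Lemma~\ref{sumPathsNoWalks} verbatim, interchanging the roles of $H$-walks and $H$-paths, and to isolate first the three structural facts about the linear sum $D_1 \bullet D_2$ that make both arguments work: (i) every vertex of $V_{D_1}$ dominates every vertex of $V_{D_2}$, and a single arc is trivially both an $H$-walk and an $H$-path, so every vertex of $V_{D_1}$ reaches every vertex of $V_{D_2}$ both by $H$-walks and by $H$-paths; (ii) no vertex of $V_{D_2}$ dominates a vertex of $V_{D_1}$, so every walk of $D_1 \bullet D_2$ that starts in $V_{D_2}$ stays in $V_{D_2}$, whence the $H$-walks (resp.\ $H$-paths) of $D_1 \bullet D_2$ joining two vertices of $V_{D_2}$ are exactly the $H$-walks (resp.\ $H$-paths) of $D_2$; and (iii) consequently every kernel by $H$-walks and every $H$-kernel of $D_1 \bullet D_2$ is contained in $V_{D_2}$, because by (i) such a set cannot contain both a vertex of $V_{D_1}$ and a vertex of $V_{D_2}$ without breaking independence, and if it were contained in $V_{D_1}$ then — as $V_{D_2}\neq\emptyset$ — no vertex of $V_{D_2}$ could reach it, breaking absorbance.

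For the forward implication, suppose $D_1 \bullet D_2$ has a kernel by $H$-walks $K$ and no $H$-kernel. By (iii) $K \subseteq V_{D_2}$, and by (ii) the $H$-walks of $D_1\bullet D_2$ among vertices of $V_{D_2}$ are precisely those of $D_2$, so $K$ is already independent and absorbent by $H$-walks inside $D_2$, i.e.\ a kernel by $H$-walks of $D_2$. If $D_2$ had an $H$-kernel $K'$, I would push it back up: by (ii) $K'$ stays $H$-independent in $D_1\bullet D_2$, every vertex of $V_{D_2}\setminus K'$ still reaches $K'$ by $H$-paths, and by (i) every vertex of $V_{D_1}$ reaches $K'$ (nonempty, since it absorbs the nonempty $D_2$) by a single arc; hence $K'$ would be an $H$-kernel of $D_1\bullet D_2$, contradicting our hypothesis. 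So $D_2$ has a kernel by $H$-walks and no $H$-kernel.

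For the converse, suppose $D_2$ has a kernel by $H$-walks $K$ and no $H$-kernel. Using (i) and (ii), $K$ is $H$-walk independent in $D_1\bullet D_2$ (any $H$-walk between two of its vertices would lie in $D_2$), every vertex of $V_{D_2}\setminus K$ reaches $K$ by $H$-walks as before, and every vertex of $V_{D_1}$ reaches $K$ by a single arc; hence $K$ is a kernel by $H$-walks of $D_1\bullet D_2$. Finally, if $D_1\bullet D_2$ had an $H$-kernel $K'$, then by (iii) $K'\subseteq V_{D_2}$ and by (ii) $K'$ would be an $H$-kernel of $D_2$, a contradiction. This establishes the equivalence.

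I do not expect a genuine obstacle: the content is a term-by-term translation of the $H$-walk argument into the language of $H$-paths. The only step that warrants care is fact (iii) — that a kernel of the linear sum must avoid $V_{D_1}$ — since it is the single place where the independence and absorbance conditions are invoked together, and it silently relies on $D_2$ being nonempty, exactly as in the proof of Lemma~\ref{sumPathsNoWalks}.
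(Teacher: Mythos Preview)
Your proof is correct and follows exactly the approach the paper intends: the paper omits the proof entirely, stating only that it is analogous to that of Lemma~\ref{sumPathsNoWalks}, and your argument is precisely that analogy carried out, with the roles of $H$-walks and $H$-paths interchanged and the key structural facts (i)--(iii) about the linear sum made explicit.
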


As noted above, using the example in Figure
\ref{Fig2} and Lemma \ref{sumPathsNoWalks}
we can recursively construct an infinite
family of digraphs having an $H$-kernel
and no kernel by $H$-walks as follows.
Let $H_2$ and $D_2$ be digraphs as in
Figure \ref{Fig2}. Define
\begin{itemize}
    \item $D^1 = D_2$.

    \item $D^{j+1} = K_1 \bullet D^j$.
\end{itemize}
The resulting family of digraphs clearly
has the desired properties.

Similarly, using the example in \ref{Fig1}
and Lemma \ref{sumWalksNoPaths} an infinite
family of digraphs having a kernel by
$H$-walks and has no $H$-kernel can be
recursively defined as follows. Let $H_1$
and $D_1$ be digraphs as in Figure
\ref{Fig2}.
\begin{itemize}
    \item $E^1 = D_1$.

    \item $E^{j+1} = K_1 \bullet E^j$.
\end{itemize}

Hence, the problem of determining the existence
of an $H$-kernel and the problem of determining
the existence of a kernel by $H$-walks are
indeed different problems.   In view of these
examples, the results of the following sections
come a bit as a surprise.


\section{The $\tilde{\mathscr{B}}_2$ class}
\label{sec:b2}

In this section we characterize the patterns that belong to
$\tilde{\mathscr{B}}_2$ as the reflexive digraphs such that
their complement do not contain odd directed cycles, i.e.,
$\tilde{\mathscr{B}}_2 = \mathscr{B}_2$.

\begin{lemma}\label{sumB2}
Let $H_1$ and $H_2$ be reflexive patterns. If $H_1$,
$H_2 \in \tilde{\mathscr{B}}_2$, then $H_1 \bullet H_2 \in
\tilde{\mathscr{B}}_2$.
\end{lemma}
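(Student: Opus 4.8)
The plan is to build the desired set for an arbitrary $(H_1\bullet H_2)$-arc-coloured digraph $D$ in two stages, first exploiting membership of $H_1$ in $\tilde{\mathscr{B}}_2$ and then of $H_2$. Write $H=H_1\bullet H_2$ and split the arcs of $D$ according to the side on which their colour lies: let $D_1$ (resp.\ $D_2$) be the spanning subdigraph of $D$ whose arcs are those coloured with a vertex of $H_1$ (resp.\ of $H_2$); the colouring of $D$ restricts to an $H_1$-arc-colouring of $D_1$ and an $H_2$-arc-colouring of $D_2$. The guiding observation is that, since $A_H$ contains all of $V_{H_1}\times V_{H_2}$ but none of $V_{H_2}\times V_{H_1}$, a walk obtained by concatenating an $H_1$-path of $D_1$ with an $H_2$-path of $D_2$ at a common endpoint is automatically an $H$-walk of $D$ (the only new colour transition is of the form $(V_{H_1},V_{H_2})$, which is an arc of $H$).

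First I would apply $H_1\in\tilde{\mathscr{B}}_2$ to $D_1$ to obtain a set $S_1\subseteq V_D$ that is independent in $D_1$ and $H_1$-absorbent in $D_1$. Next I would apply $H_2\in\tilde{\mathscr{B}}_2$ to the induced subdigraph $D_2[S_1]$ (an $H_2$-arc-coloured digraph) to obtain $S\subseteq S_1$ that is independent in $D_2[S_1]$ and $H_2$-absorbent in $D_2[S_1]$. I claim $S$ is an independent, $H$-absorbent set of $D$. Independence is immediate: an arc of $D$ joining two vertices of $S$ would, according to its colour, be an arc of $D_1$ inside $S\subseteq S_1$ (contradicting independence of $S_1$ in $D_1$) or an arc of $D_2$ with both ends in $S_1$, i.e.\ an arc of $D_2[S_1]$ inside $S$ (contradicting independence of $S$ in $D_2[S_1]$).

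For $H$-absorbance, take $v\in V_D\setminus S$. If $v\in S_1$, then $v\in S_1\setminus S$ reaches $S$ by an $H_2$-path of $D_2[S_1]$, which is an $H$-path of $D$. If $v\notin S_1$, then $v$ reaches $S_1$ by an $H_1$-path $P$ of $D_1$; I would truncate $P$ at its first vertex $w$ lying in $S_1$, so that $P$ becomes an $H_1$-path from $v$ to $w$ all of whose vertices other than $w$ avoid $S_1$ (a prefix of an $H_1$-path is again an $H_1$-path). If $w\in S$ we are done since $P$ is already an $H$-path. Otherwise $w\in S_1\setminus S$, so $w$ reaches some $s\in S$ by an $H_2$-path $Q$ of $D_2[S_1]$. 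Every vertex of $Q$ lies in $S_1$ while the vertices of $P$ other than $w$ lie outside $S_1$; hence $P$ and $Q$ meet only at $w$, their concatenation is a genuine path, and by the guiding observation it is an $H$-walk, therefore an $H$-path, from $v$ to $s\in S$.

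The delicate point — the one I would be most careful about — is precisely this last step: a careless concatenation of an $H_1$-path with an $H_2$-path need not be a path, and (as the examples of Section~\ref{sec:basic} show) an $H$-walk need not contain an $H$-path with the same endpoints, so one cannot simply invoke a walk-to-path reduction here. Choosing $P$ to enter $S_1$ only at its last vertex is exactly what forces $P$ and $Q$ to be internally disjoint and bypasses this difficulty; the remainder is bookkeeping about which side of $H$ each colour belongs to.
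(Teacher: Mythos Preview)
Your proof is correct and follows essentially the same two-stage construction as the paper: split $D$ into $D_1$ and $D_2$, take an independent $H_1$-absorbent set $S_1$ in $D_1$, then an independent $H_2$-absorbent subset $S$ in $D_2[S_1]$. The only cosmetic difference is in how the concatenated path is made internally disjoint: you truncate the $H_1$-path at its first entry into $S_1$, whereas the paper keeps the full $H_1$-path $P_1$ and instead cuts the $H_2$-path $P_2$ at the last vertex of $P_2$ that also lies on $P_1$; both tricks achieve the same goal.
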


\begin{proof}
Let $D$ be an $H_1 \bullet H_2$-arc-colured multidigraph.   For
$i \in \{ 1, 2 \}$, let $D_i$ be the spanning subdigraph of $D$
with arc set consisting of precisely those arcs of $D$ coloured
with the vertices of $H_i$.   Let $K$ be an independent
$H_1$-absorbent set of $D_1$, and let $K'$ be an independent
$H_2$-absorbent set of $D_2[K]$, the subdigraph induced by $K$
in $D_2$.   Clearly, $K'$ is independent in $D$; to verify
that $K'$ is $(H_1 \bullet H_2)$-absorbent, let $x$ be a
vertex of $D$ not in $K'$.   If $x \in K$, then the choice of
$K'$ guarantees that $x$ is $H_2$-absorbed by it.   If $x
\notin K$, then there is an $H_1$-path $P_1$ in $D_1$ (and
thus also in $D$) from $x$ to some vertex $y \in K$.   If $y
\notin K'$, then, as in the previous case, there is an
$H_2$-path $P_2$ from $y$ to some vertex $z \in K'$ in $D_1
[K]$.   Let $s$ be the last vertex in $P_2$ which is also in
$P_1$, then, by the structure of $H_1 \bullet H_2$, we have
that $x P_1 s P_2 z$ is an $(H_1 \bullet H_2)$-path in $D$.
Therefore, $K'$ is an independent $(H_1 \bullet
H_2)$-absorbent set.
\end{proof}

\begin{theorem}
\label{B2  characterization}
If $H$ is a reflexive pattern, then $H \in \tilde{\mathscr{B}}_2$
if and only if $H^c$ does not contain any odd cycles.
\end{theorem}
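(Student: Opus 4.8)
The plan is to establish the two inclusions $\tilde{\mathscr{B}}_2 \subseteq \mathscr{B}_2$ and $\mathscr{B}_2 \subseteq \tilde{\mathscr{B}}_2$ separately, phrasing everything in terms of the condition that $H^c$ contains no odd directed cycle (equivalently, by the Arpin--Linek characterization of $\mathscr{B}_2$, that $H \in \mathscr{B}_2$). Throughout, by Lemma~\ref{prop Bi}(1) we may assume $H$ is reflexive, since a pattern with a loopless vertex lies in neither class.

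For the forward direction, suppose $H \notin \mathscr{B}_2$, i.e.\ $H^c$ has an odd directed cycle. I would invoke (or reprove) the Arpin--Linek argument that produces an $H$-arc-coloured digraph $D$ with no independent $H$-absorbent-by-walks set; the key point I must check is that in that construction, reachability by $H$-walks and reachability by $H$-paths actually coincide on the relevant pairs of vertices, so that the same $D$ witnesses $H \notin \tilde{\mathscr{B}}_2$. Typically such obstruction digraphs are built from short walks (arising from the odd cycle in $H^c$) whose arcs already form paths, so this check should be routine; if not, one replaces each walk by a path on fresh vertices using a ``blow-up'' construction. This gives $\tilde{\mathscr{B}}_2 \subseteq \mathscr{B}_2$.

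For the reverse direction, suppose $H \in \mathscr{B}_2$; I want every $H$-arc-coloured digraph $D$ to have an independent $H$-absorbent set. The natural approach is to decompose $H$ into ``indecomposable'' reflexive pieces with respect to the linear sum $\bullet$ and apply Lemma~\ref{sumB2} inductively: if $H = H_1 \bullet H_2$ with both $H_i \in \mathscr{B}_2$ (which follows since $H_i$ is an induced subdigraph of $H$, using Lemma~\ref{prop Bi}(2)), then $H \in \tilde{\mathscr{B}}_2$ provided both $H_i \in \tilde{\mathscr{B}}_2$. The base case is the crux: I must show that every \emph{$\bullet$-indecomposable} reflexive pattern $H$ with $H^c$ free of odd directed cycles lies in $\tilde{\mathscr{B}}_2$. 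Here I would analyze the structure of such $H$ --- for a reflexive pattern, $H^c$ being odd-cycle-free means $H^c$ has a specific bipartite-like transitive-tournament-complement structure --- and argue directly that for any $H$-coloured $D$ one can extract an independent $H$-absorbent set, most likely by reducing to the two-colour Sands--Sauer--Woodrow theorem or to the complete reflexive case after collapsing true twins via the $H_{xy}$ operation described in the introduction.

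The main obstacle will be this base case: showing that $\bullet$-indecomposable patterns in $\mathscr{B}_2$ are in $\tilde{\mathscr{B}}_2$, since $H$-walk arguments do \emph{not} transfer automatically to $H$-paths (as the examples of Figures~\ref{Fig1} and~\ref{Fig2} dramatically show). The saving grace, which I expect the authors exploit, is that for these particular small/structured patterns every $H$-walk between two vertices of $D$ \emph{does} contain an $H$-path with the same endpoints --- precisely the property studied in Section~\ref{sec:trans} --- so on the indecomposable pieces $H$-path and $H$-walk reachability agree, and the Arpin--Linek result applies verbatim. Formalizing exactly which indecomposable reflexive patterns have this transitivity property, and handling any that do not by an ad hoc independent-absorbent construction, is where the real work lies.
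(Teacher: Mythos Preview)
Your forward direction matches the paper's: colour an odd directed cycle $(x_0,\dots,x_{2k},x_0)$ with the colours of the odd cycle in $H^c$; consecutive colours are non-adjacent in $H$, so every $H$-path in $D$ is a single arc and no independent set is $H$-absorbent. The reverse-direction skeleton (build $H$ via linear sums and invoke Lemma~\ref{sumB2}) also matches, but your base-case plan has a genuine gap. A $\bullet$-indecomposable reflexive pattern with $H^c$ odd-cycle-free need not be transitive: if $H^c$ is the directed $4$-cycle on $\{1,2,3,4\}$, then $H^c$ is strong (so $H$ is $\bullet$-indecomposable) and bipartite (so odd-cycle-free), yet in $H$ we have $(2,1),(1,3)\in A_H$ while $(2,3)\notin A_H$. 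So Section~\ref{sec:trans} cannot carry the base case, and your ``saving grace'' fails.

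What the paper actually does is simpler and avoids transitivity entirely. One takes the strong components $H_1,\dots,H_k$ of $H^c$ in topological order; each is bipartite (strong and odd-cycle-free), so in $H$ the set $V(H_i)$ splits into two parts each inducing a reflexive complete digraph. The disjoint union of two reflexive cliques lies in $\tilde{\mathscr{B}}_2$ (Sands--Sauer--Woodrow, together with the fact that monochromatic walks contain monochromatic paths), and Lemma~\ref{prop Bi}(3) then places $H[V(H_i)]$ itself in $\tilde{\mathscr{B}}_2$. Iterating Lemma~\ref{sumB2} shows that $H'=H_k\bullet\cdots\bullet H_1$ (with $H_i$ now standing for $H[V(H_i)]$) lies in $\tilde{\mathscr{B}}_2$; since $H'$ is only a \emph{spanning subdigraph} of $H$, one needs a second application of Lemma~\ref{prop Bi}(3) to conclude $H\in\tilde{\mathscr{B}}_2$. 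Your proposal does not isolate this role of Lemma~\ref{prop Bi}(3), and without it the induction on linear sums does not reach $H$ itself.
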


\begin{proof}
For the necessity, we prove the contrapositive. If $H^c$
contains the odd cycle $(0, 1, 2, \dots , 2k, 0)$ then let $D$
be the odd cycle $D = (x_0, x_1, \dots ,x_{2k}, x_0)$ and
$c(x_i, x_{i+1}) = i$, for $0 \le i \le 2k-1$, and $c(x_{2k},
x_0) = 2k$.   Every $H$-path in $D$ is a single arc, hence
$D$ has no independent $H$-absorbent set and $D \notin
\tilde{\mathscr{B}}_2$.

For the sufficiency, let us suppose that $H^c$ has no odd
cycle. Let $H_1,\dots, H_k$ be an ordering of the strong
components of $H^c$, such that $H_i$ is an  initial
component of $H^c-(\bigcup_{j=1}^{i-1} H_j)$. Since every
$H_i$ is strongly connected and has no odd cycles, then
it is a bipartite digraph with bipartition $(V_{i1}, V_{i2})$,
for $1 \le i \le k$. It follows that in $H$ every $V_{ij}$
induces a complete reflexive digraph, for $1\le i\le k$
and for $1\le j\le 2$. Then, by Lemma \ref{prop Bi}.3, $H_i
\in \tilde{\mathscr{B}}_2$.

 By definition of $H_i$ in $H$, $H' = H_k \bullet (\cdots
 \bullet (H_3 \bullet(H_2 \bullet H_1)) \cdots )$ is a spanning
 subdigraph of $H$. Then, it follows from the previous lemma
 that $H \in \tilde{\mathscr{B}}_2$.
\end{proof}


\section{Transitive patterns}
\label{sec:trans}

In this section we present a characterization
of the patterns $H$, such that for every digraph
$D$ and every $H$-arc-colouring of $D$, every
$H$-walk between two vertices in $D$ contains
an $H$-path with the same endpoints.

Given a set of digraphs $\mathcal{S}$, we say
that a digraph is {\em $\mathcal{S}$-free} if
it does not contain any digraph in $\mathcal{S}$
as an induced subdigraph.   Recall that if a
property $\mathcal{P}$ is closed under taking
induced subdigraphs, then there exists a family
of forbidden digraphs, $\mathcal{F}$, not
necessarily finite, that characterize the digraphs
with that property.   In other words, a digraph
satisfies the property $\mathcal{P}$ if and only
if it is $\mathcal{F}$-free. As an example,
consider the following theorem.

\begin{theorem}
\label{transdigchar}
If $H$ is a reflexive digraph, then $H$ is a
transitive digraph if and only if it is
$\mathcal{T}$-free, where $\mathcal{T}$ is
the family of reflexive digraphs depicted
in Figure \ref{transFor}.
\end{theorem}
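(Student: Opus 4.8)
The plan is to prove both directions, the easy one being necessity and the harder one sufficiency. Since transitivity is preserved under taking induced subdigraphs (if $(x,y),(y,z)\in A_H$ with $x,z$ in an induced subdigraph and $y$ possibly outside, this is already a problem — so one must be slightly careful: I would first note that transitivity of a \emph{reflexive} digraph is exactly the condition that $A_H$ is closed under composition, and that this property does pass to induced subdigraphs because the witnessing vertex $y$ satisfying $(x,y),(y,z)\in A_H$, if $H$ is transitive, can be taken to be $x$ or $z$ themselves via reflexivity — no, more carefully: transitivity is a universally quantified closure condition, so an induced subdigraph of a transitive digraph need not be transitive in general, but here it is, precisely because reflexivity forces any "missing composition arc" to already be present). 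I would make this observation explicitly first, as it is what licenses the forbidden-subdigraph formulation promised in the paragraph preceding the theorem. Then, for necessity, I would simply exhibit each digraph in $\mathcal{T}$ and check it is reflexive but not transitive, so any $H$ containing one as an induced subdigraph fails transitivity; conversely a non-transitive reflexive $H$ has vertices $x,y,z$ with $(x,y),(y,z)\in A_H$ but $(x,z)\notin A_H$, and $x,y,z$ must be pairwise distinct (reflexivity rules out $x=y$, $y=z$; and $x=z$ would force $(x,z)=(x,x)\in A_H$). So $H[\{x,y,z\}]$ is a reflexive digraph on three vertices containing the path $x\to y\to z$ but not the arc $x\to z$.

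**The core combinatorial step.** The real content is: enumerate all reflexive digraphs on exactly three vertices that are non-transitive and \emph{minimal} with respect to being non-transitive, i.e. those whose every proper induced subdigraph \emph{is} transitive — these minimal witnesses are exactly the members of $\mathcal{T}$. Since $H[\{x,y,z\}]$ above is non-transitive, it contains some member of $\mathcal{T}$ as an induced subdigraph, and since it has only three vertices, either it \emph{is} that member or the member sits on two vertices — but a reflexive digraph on $\le 2$ vertices is automatically transitive, so $H[\{x,y,z\}]$ itself must be a member of $\mathcal{T}$, and hence $H$ is not $\mathcal{T}$-free. So the whole theorem reduces to the finite check that $\mathcal{T}$, as drawn in Figure \ref{transFor}, is precisely the list of non-transitive reflexive digraphs on three vertices. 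I would organize this enumeration by the "symmetric part" of $H[\{x,y,z\}]$: a reflexive digraph on three vertices is determined by, for each of the three unordered pairs, whether it is a non-arc, a single arc (two orientations), or a digon. One then lists configurations and discards the transitive ones; by the argument above every non-transitive one is already minimal, so nothing further is needed. The number of cases is modest (up to the $S_3$ action and arc-reversal there are only a handful), which is why the paper defers to a figure rather than a table.

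**Anticipated obstacle.** The only genuinely delicate point is the first one — justifying that "transitive" is equivalent to a forbidden-induced-subdigraph condition at all, i.e. that for \emph{reflexive} digraphs, containing no non-transitive induced subdigraph on three vertices implies transitivity. This is where reflexivity is essential and where the argument in the preceding paragraph about properties "closed under induced subdigraphs" is being invoked in a slightly nonstandard way: transitivity is not literally hereditary, but the \emph{failure} of transitivity is always witnessed on three vertices (given reflexivity, as shown above the witnesses $x,y,z$ are distinct), which is exactly what is needed. Once that is pinned down, the rest is the bounded enumeration, which I would present compactly by appeal to Figure \ref{transFor}, checking that each depicted digraph is reflexive and contains a directed path of length two whose endpoints are non-adjacent, and conversely that any three-vertex reflexive digraph with such a path is isomorphic (or anti-isomorphic, if $\mathcal{T}$ is taken up to that) to one in the figure.
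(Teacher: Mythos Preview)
Your approach is correct and is essentially the paper's own: both directions reduce to the observation that a failure of transitivity in a reflexive digraph is witnessed by three \emph{distinct} vertices, so the induced subdigraph on those three vertices must be one of the non-transitive reflexive $3$-vertex digraphs, i.e.\ a member of $\mathcal{T}$. One small correction to your exposition: transitivity \emph{is} hereditary under induced subdigraphs (if $(x,y),(y,z)\in A_{H'}$ then $y$ is already in $V_{H'}$, so the witness never lies outside), so your worry there is unfounded and reflexivity is needed only to force $x\ne z$, exactly as you later use it.
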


\begin{proof}
The necessity is trivial.  The sufficiency
follows from a simple analysis of cases
considering two consecutive arcs in a
$\mathcal{T}$-free digraph $D$.
\end{proof}

If we consider the elements of $\mathcal{T}$
as colour patterns $H$, each of them is an
example showing that not every $H$-walk
contains an $H$-path with the same endpoints;
see Figure \ref{transFor}.   Our following
result states that these are the only
(minimal) patterns with this property.

\begin{figure}[!htb]
\begin{center}
\includegraphics[scale=.9]{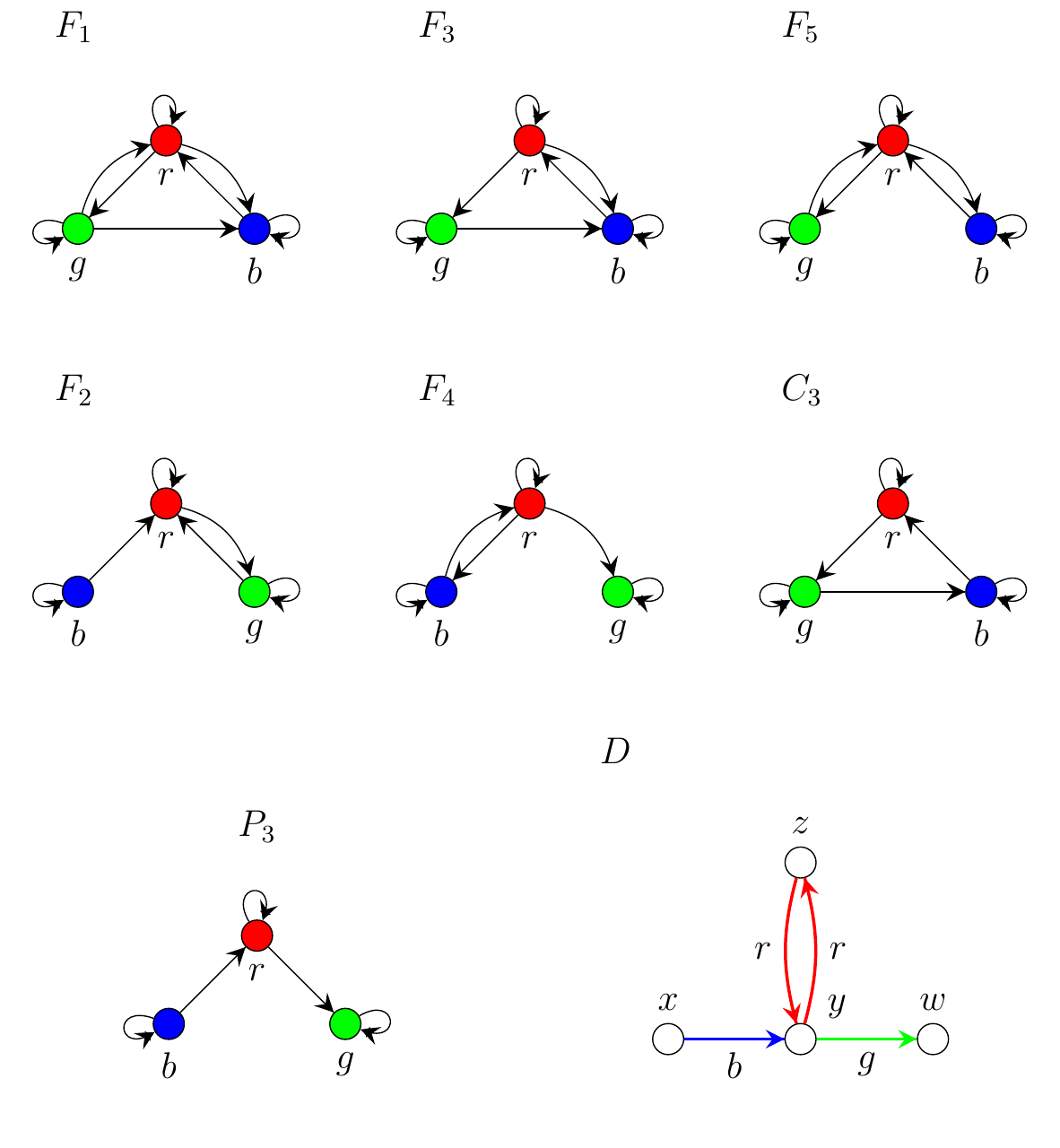}
\caption{The family $\mathcal{T}$, and an
example where $(x,y,z,y,w)$ is an $H$-walk
from $x$ to $w$ in $D$ but there is no
$H$-path from $x$ to $w$ in $D$, for every $H$
in $\mathcal{T}$.}
\label{transFor}
\end{center}
\end{figure}

\begin{theorem} \label{walkContainsPath}
Let $H$ be a reflexive digraph. The
following statements are equivalent.
\begin{enumerate}
    \item $H$ is transitive.

    \item $H$ is $\mathcal{T}$-free.

    \item For every $H$-arc-coloured
          digraph $D$, and every pair
          of different vertices $x,y$
          of $D$, every $H$-walk from
          $x$ to $y$ in $D$ contains an
          $H$-path from $x$ to $y$ in $D$.
\end{enumerate}
\end{theorem}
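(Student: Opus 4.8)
The plan is to get the equivalence $(1)\Leftrightarrow(2)$ for free from Theorem~\ref{transdigchar}, and then close a cycle of implications by proving $(1)\Rightarrow(3)$ and $(3)\Rightarrow(2)$. So after this, all three statements are equivalent.

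The one step with real content is $(1)\Rightarrow(3)$, and I would handle it by iterated shortcutting of walks. Let $D$ be an $H$-arc-coloured digraph, let $x\neq y$ be vertices of $D$, and let $W$ be an $H$-walk from $x$ to $y$ with arc sequence $a_1,\dots,a_k$, where $a_t$ runs from $x_{t-1}$ to $x_t$, $x_0=x$ and $x_k=y$. If $W$ visits no vertex twice it is already a path. Otherwise pick $0\le i<j\le k$ with $x_i=x_j$ and pass to the walk with arc sequence $a_1,\dots,a_i,a_{j+1},\dots,a_k$, a strictly shorter walk of $D$ from $x$ to $y$. The key claim is that this new walk is still an $H$-walk: its colour sequence is obtained from that of $W$ by deleting the block $c(a_{i+1}),\dots,c(a_j)$ and introducing the single new consecutive pair $( c(a_i),c(a_{j+1}) )$, and this pair only appears when $1\le i$ and $j+1\le k$. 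In that case the colour sequence of $W$ contains the $H$-walk $c(a_i)\to c(a_{i+1})\to\cdots\to c(a_j)\to c(a_{j+1})$, which has at least three vertices, hence length at least two; since $H$ is transitive, a walk of positive length between two vertices of $H$ forces an arc between its endpoints, so $( c(a_i),c(a_{j+1}) )\in A_H$. When $i=0$ or $j=k$ the new colour sequence is merely a contiguous subsequence of the old one, hence still a walk in $H$, and the case $i=0$, $j=k$ is impossible because $x\neq y$. Iterating, after finitely many steps we reach an $H$-path from $x$ to $y$ whose arcs form a subsequence of $a_1,\dots,a_k$, i.e.\ an $H$-path contained in $W$.

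For $(3)\Rightarrow(2)$ I would argue by contrapositive. If $H$ is not $\mathcal{T}$-free, then it contains some $T\in\mathcal{T}$ as an induced subdigraph. Take the $T$-arc-coloured digraph $D$ exhibited in Figure~\ref{transFor}, for which $(x,y,z,y,w)$ is a $T$-walk from $x$ to $w$ but $D$ has no $T$-path from $x$ to $w$. Since $T$ is an \emph{induced} subdigraph of $H$, the same colouring makes $D$ an $H$-arc-coloured digraph, so $(x,y,z,y,w)$ is an $H$-walk; moreover any $H$-path in $D$ uses only colours of $V_T$ and only arcs of $H$ among those colours, which are exactly the arcs of $T$, so it would be a $T$-path from $x$ to $w$, and none exists. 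Hence $D$ witnesses that statement $(3)$ fails. Combining this with Theorem~\ref{transdigchar} (which gives $(1)\Leftrightarrow(2)$) and with $(1)\Rightarrow(3)$ yields the claimed equivalence.

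The expected main obstacle is essentially nonexistent beyond bookkeeping: the crux is the single observation, used in $(1)\Rightarrow(3)$, that transitivity collapses the colour walk spanning a removed closed section of $W$ into one arc of $H$, and the remaining details (shortcuts that delete a prefix or suffix of $W$, arc indexing in a multidigraph, the trivial direction $x\neq y$) are routine. If one prefers, $(3)\Rightarrow(1)$ can be proved directly in the same spirit: from a non-transitive triple $(u,v),(v,w)\in A_H$ with $(u,w)\notin A_H$ one builds $D$ on four vertices $a,b,c,d$ with arcs $(a,b),(b,c),(c,b),(b,d)$ coloured $u,v,v,w$ respectively, so that $(a,b,c,b,d)$ is an $H$-walk but the only $a$--$d$ path $(a,b,d)$ is not an $H$-path; this could replace the appeal to Figure~\ref{transFor}.
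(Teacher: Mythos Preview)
Your proposal is correct and follows essentially the same approach as the paper: both invoke Theorem~\ref{transdigchar} for $(1)\Leftrightarrow(2)$, prove $(1)\Rightarrow(3)$ by shortcutting a repeated vertex in an $H$-walk and using transitivity of $H$ to see the shortened colour sequence is still a walk in $H$, and prove $(3)\Rightarrow(2)$ by contrapositive via the example in Figure~\ref{transFor}. Your treatment is in fact slightly more careful than the paper's, since you explicitly address the boundary cases $i=0$ or $j=k$ (where no new colour adjacency is introduced) that the paper's write-up glosses over.
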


\begin{proof}
The equivalence of (1) and (2) has been
already proved in Theorem \ref{transdigchar}.

We will prove (3) implies (2) by
contrapositive.   Suppose that $H$ is a
reflexive digraph containing one of the
patterns in Figure \ref{transFor} as an
induced subdigraph.  Hence, the digraph
at the bottom right of the same figure
is an $H$-arc-coloured digraph with
vertices $x$ and $w$ such that an $H$-walk
exists from $x$ to $w$, but no $H$-path
exists with the same endpoints.

For (1) implies (3), suppose that $H$
is transitive reflexive digraph. Let
$D$ be an $H$-arc-coloured digraph,
let $x$ and $y$ be different vertices
in $D$ and let $\gamma = (x = x_0,
\dots, x_n = y)$ be an $H$-walk from
$x$ to $y$ in $D$. Proceeding by
induction on the length of $\gamma$,
$\ell (\gamma)$, we will prove that
$\gamma$ contains an $H$-path from
$x$ to $y$ in $D$.

If $\ell (\gamma)=1$, then $\gamma$ is
an $H$-path from $x$ to $y$ in $D$. Let
$\gamma = (x = x_0, x_1, \dots, x_n =
y)$ be an $H$-walk from $x$ to $y$ in
$D$ of length $n$.   If $x_i \ne x_j$
for each $i \neq j$, then $\gamma$ is an
$H$-path from $x$ to $y$ in $D$.  Else,
consider $i, j \in \{ 0, \dots, n \}$,
such that $x_i = x_j$ and $i < j$.
Since $\gamma$ is an $H$-walk in $D$,
we have that $(c(x_{k-1}, x_k), c(x_k,
x_{k+1})) \in A_H$ for each $k \in \{
1, \dots, n-1 \}$. Moreover, as
$(c(x_{i-1}, x_i), c(x_i,x_{i+1}),
\dots , c(x_{j-1}, x_j), c(x_j,
x_{j+1}))$  is a directed walk in $H$,
then $(c(x_{i-1}, x_i), c(x_j, x_{j+1}))
\in A_H$ by the transitivity of $H$.
Therefore $\gamma' = (x = x_0, x_1,
\dots, x_i = x_j, x_{j+1}, \dots, x_n
= y)$ is an $H$-walk form $x$ to $y$
in $D$ of length less than $n$. By
induction hypothesis, $\gamma'$
contains an $H$-path from $x$ to $y$
in $D$, say $T$ which is also contained
in $\gamma$.
\end{proof}

The above result allows us to establish
the following theorem.

\begin{theorem}\label{transPathIffWalk}
Let $H$ be a transitive reflexive digraph
and $D$ an $H$-arc-coloured digraph. The
subset $K$ of $V_D$ is a kernel by
$H$-walks of $D$ if and only if it is an
$H$-kernel of $D$.
\end{theorem}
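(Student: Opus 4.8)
The plan is to reduce the statement to Theorem~\ref{walkContainsPath} by showing that, for a transitive reflexive pattern $H$, reachability by $H$-walks and reachability by $H$-paths coincide on every $H$-arc-coloured digraph $D$. Indeed, one direction is immediate: every $H$-path is an $H$-walk, so if $u$ reaches $v$ by $H$-paths, then $u$ reaches $v$ by $H$-walks. For the converse, suppose $u$ reaches $v$ by $H$-walks via some $H$-walk $\gamma$; by the implication (1)$\Rightarrow$(3) of Theorem~\ref{walkContainsPath}, $\gamma$ contains an $H$-path from $u$ to $v$, so $u$ reaches $v$ by $H$-paths. Hence the two reachability relations are literally the same binary relation on $V_D$.

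Once this equivalence of reachability notions is established, the rest is purely formal. First I would note that a subset $K$ of $V_D$ is independent by $H$-walks if and only if it is $H$-independent (independent by $H$-paths): both conditions say ``no vertex of $K$ reaches another vertex of $K$,'' and we just argued the underlying reachability relation is the same. Likewise, $K$ is absorbent by $H$-walks if and only if it is $H$-absorbent: both say ``every vertex of $V_D-K$ reaches some vertex of $K$.'' Conjoining, $K$ is a kernel by $H$-walks if and only if $K$ is an $H$-kernel, which is exactly the claim.

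The only subtlety worth spelling out is that the equivalence of reachability relations depends on $H$ being transitive, and this is used through Theorem~\ref{walkContainsPath}; one should be careful that the hypothesis there (reflexive, transitive) matches the hypothesis of the present theorem, which it does. I do not expect any genuine obstacle here: the content is entirely in Theorem~\ref{walkContainsPath}, and the present result is a clean corollary obtained by unwinding the definitions of independence, absorbance, and kernel in terms of reachability. The main thing to get right is simply to state the ``walk-reachability equals path-reachability'' lemma explicitly and then check the two set-theoretic equivalences (independence and absorbance) in both directions.
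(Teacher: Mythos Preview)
Your proposal is correct and follows essentially the same approach as the paper: both reduce the statement to Theorem~\ref{walkContainsPath} together with the trivial observation that every $H$-path is an $H$-walk, and then derive the equivalence of independence and absorbance in both senses. The only cosmetic difference is that you factor out the ``walk-reachability equals path-reachability'' equivalence as a single lemma, whereas the paper applies the two ingredients directly inside each of the four implications (independence and absorbance, in each direction).
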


\begin{proof}

Let $K \subseteq V_D$ be a kernel by $H$-walks of $D$.
Since $K$ is independent by $H$-walks in $D$ and
every $H$-path is an $H$-walk in $D$ then, in particular,
$K$ is an $H$-independent set in $D$. To show that $K$
is an $H$-absorbent set in $D$, consider a vertex $x$ in
$V_D  - K$. Since $K$ is absorbent by $H$-walks
in $D$, then there is a $w$ in $K$ such that there
is an $H$-walk from $x$ to $w$ in $D$, say $\gamma$.
By Theorem \ref{walkContainsPath} $\gamma$ contains
an $H$-path from $x$ to $w$ in $D$. Therefore $K$ is an
$H$-kernel of $D$.

Conversely, let $K \subseteq V_D$ be an $H$-kernel of $D$.
Every $H$-path is an $H$-walk in $D$, so we have that
$K$ is absorbent by $H$-walks.   It follows from Theorem
\ref{walkContainsPath} that every $H$-walk contains an
$H$-path; since  there are no $H$-paths between vertices
in $K$, it follows that there are neither $H$-walks between
vertices of $K$, and thus, $K$ is independent by $H$-walks.
\end{proof}


\section{Panchromatic patterns by paths}
\label{sec:ppp}

We now turn our attention to
$\tilde{\mathscr{B}}_3$.   In
\cite{galeanaJGT90}, a characterization
of panchromatic patterns (by walks) was
given in terms of forbidden subdigraphs
and, although it is based on the
characterization of $\mathscr{B}_3$ found
in \cite{galeanaDM339} (which we mentioned
earlier is flawed), a similar approach can
be used to describe the structure of
the panchromatic patterns by paths.
The idea is to classify all the patterns
on three vertices; knowing which of them
are panchromatic patterns by paths gives
us enough information to describe the general
structure of patterns in this family.

We begin our analysis with a simple
observation relating $H$-kernels with
$H$-kernels by walks.   It is a direct
consequence of Theorem
\ref{transPathIffWalk}, so its proof
will be omitted.

\begin{proposition}\label{transitiveCase}
If $H$ is a transitive reflexive digraph,
then $H$ is a panchromatic pattern by paths
if and only if $H$ is a panchromatic pattern
(by walks).
\end{proposition}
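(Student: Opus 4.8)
The plan is to obtain Proposition~\ref{transitiveCase} as an immediate corollary of Theorem~\ref{transPathIffWalk}. Recall that a pattern $H$ is a panchromatic pattern by paths precisely when $H \in \tilde{\mathscr{B}}_3$, i.e., every $H$-arc-coloured digraph $D$ has an $H$-kernel, and $H$ is a panchromatic pattern (by walks) precisely when every $H$-arc-coloured digraph $D$ has a kernel by $H$-walks. So the statement to prove is: for a transitive reflexive digraph $H$, every $H$-arc-coloured digraph has an $H$-kernel if and only if every $H$-arc-coloured digraph has a kernel by $H$-walks.

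First I would fix a transitive reflexive digraph $H$ and an arbitrary $H$-arc-coloured digraph $D$. By Theorem~\ref{transPathIffWalk}, since $H$ is transitive and reflexive, a subset $K \subseteq V_D$ is a kernel by $H$-walks of $D$ if and only if it is an $H$-kernel of $D$. Hence the two existence statements are equivalent for this particular $D$: $D$ has an $H$-kernel if and only if $D$ has a kernel by $H$-walks. Since $D$ was an arbitrary $H$-arc-coloured digraph, quantifying over all such $D$ gives the equivalence of the two universal statements, which is exactly the claim that $H$ is a panchromatic pattern by paths if and only if $H$ is a panchromatic pattern by walks.

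There is essentially no obstacle here: the content is entirely carried by Theorem~\ref{transPathIffWalk}, and the only thing to be careful about is the bookkeeping of the quantifiers (the biconditional between "for all $D$, $D$ has an $H$-kernel" and "for all $D$, $D$ has a kernel by $H$-walks" follows because the two inner properties of $D$ coincide for every $D$, not merely because one implies the other). This is why the authors state that the proof will be omitted. If one wished to write it out, a single sentence invoking Theorem~\ref{transPathIffWalk} and noting that it holds for every $H$-arc-coloured digraph $D$ would suffice.
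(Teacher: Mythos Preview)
Your proposal is correct and matches the paper's approach exactly: the authors explicitly state that the proposition is a direct consequence of Theorem~\ref{transPathIffWalk} and omit the proof, and your argument is precisely the routine unpacking of that implication. The only content, as you note, is the observation that the set-level equivalence of $H$-kernels and kernels by $H$-walks for each $D$ immediately yields the equivalence of the two universal existence statements.
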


Arpin and Linek showed in \cite{arpinDM307}
that every reflexive digraph with order one
or two is a panchromatic pattern. These
patterns are trivially transitive, so by
Proposition \ref{transitiveCase} we have
that the panchromatic patterns with order
one or two are also panchromatic patterns
by paths.

\begin{figure}[!htb]
\begin{center}
\includegraphics[scale=.9]{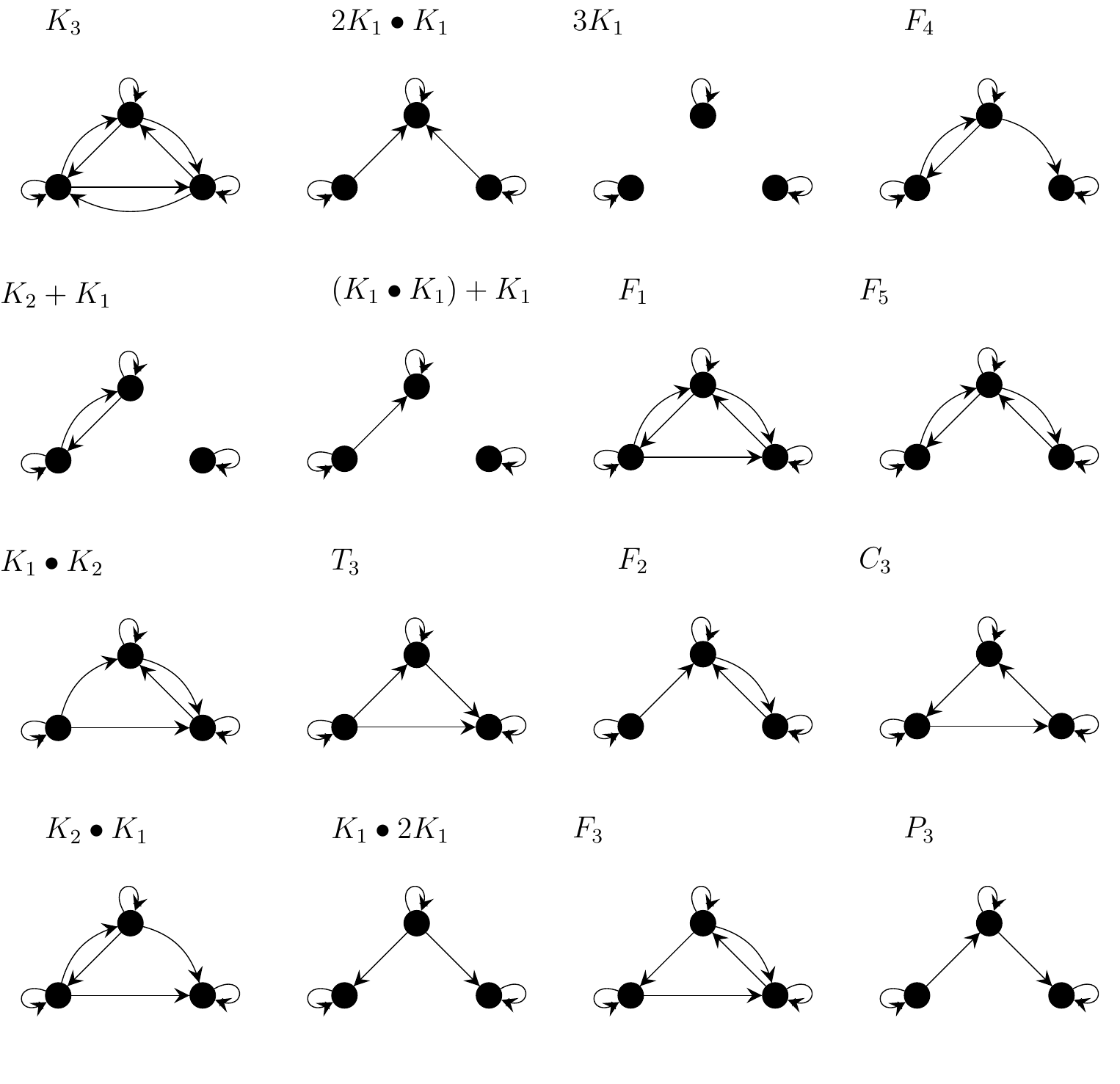}
\caption{All non-isomorphic reflexive digraphs on 3 vertices.}\label{order3}
\end{center}
\end{figure}

Using Proposition \ref{transitiveCase}
we can analyze nine of the patterns on
three vertices depicted in Figure
\ref{order3}.   The patterns $K_3$, $K_1
\bullet K_2$, $K_2 \bullet K_1$ and $K_2
+ K_1$ are transitive and belong to
$\mathscr{B}_3$, and hence, they also
belong to $\tilde{\mathscr{B}}_3$.
Analogously, the patterns $2K_1 \bullet
K_1$, $T_3$, $(K_1 \bullet K_1) + K_1$,
$K_1 \bullet 2 K_1$ and $3 K_1$ do not
belong to $\tilde{\mathscr{B}}_3$.

The following result was proved by Arpin
and Linek in \cite{arpinDM307}; although
they stated it for panchromatic patterns,
it is easy to notice that the same proof
works for panchromatic patterns by paths.

\begin{lemma}\label{arpLinNotB3}
Let $H$ be a pattern.   If $H$ contains
a walk $W$ such that $W=(x_0, x_1, \dots,
w_k)$ and
\begin{enumerate}
\item for all $x_j$, $0 \le j \le k-1$,
      there is a colour $c_j \in V_H$ such
      that $(x_j, c_j) \notin A_H$,
\item $(x_k,x_0) \notin A_H$,
\end{enumerate}
then $H\notin \tilde{\mathscr{B}_3}$.
\end{lemma}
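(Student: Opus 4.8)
\textbf{Plan of proof for Lemma \ref{arpLinNotB3}.}
The strategy is the classical one from Arpin and Linek: build a bad $H$-arc-coloured digraph $D$ from the walk $W$ whose colour changes are all ``permitted'' by $H$, but which cannot be absorbed into a single vertex because, at each step of $W$, the required outgoing colour leads to a dead end. First I would take the walk $W=(x_0,x_1,\dots,x_k)$ in $H$ and, for each $j$ with $0\le j\le k-1$, fix the colour $c_j\in V_H$ with $(x_j,c_j)\notin A_H$ guaranteed by hypothesis (1). Then I construct $D$ on vertices $\{u_0,u_1,\dots,u_k\}$ together with, for each $j$, an extra ``trap'' vertex $t_j$; the arc $(u_{j-1},u_j)$ is coloured $x_{j-1}$ (so that following $u_0,u_1,\dots,u_k$ uses exactly the colour sequence $x_0,x_1,\dots,x_{k-1}$, which is a walk in $H$, hence an $H$-path), the arc $(u_k,u_0)$ is coloured $x_k$, and the arc $(u_j,t_j)$ is coloured $c_j$. (Depending on the exact gadget Arpin--Linek used, the trap vertices may instead be a single sink, or the $c_j$-arcs may be loops replaced by short paths since $D$ is loopless; I would match their construction, using reflexivity of $H$ from Lemma \ref{prop Bi}.1 where a colour must be repeatable.)

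Next I would verify that $D$ has no $H$-kernel. An $H$-kernel $K$ is $H$-independent, and I would first argue it must be a single vertex among the $u_j$'s (the trap vertices $t_j$ have out-degree zero or only lead back in a controlled way, so absorbance forces the kernel into the ``core''; and any two core vertices reach each other via the cycle $u_0,\dots,u_k,u_0$, whose colour sequence $x_0,\dots,x_k$ closes up — here is where $(x_k,x_0)\notin A_H$ from hypothesis (2) matters, since it prevents the cyclic walk from being shortcut or re-entered in a way that would destroy the argument, forcing the relevant $H$-path structure). So $K=\{u_i\}$ for some $i$. Then I check that $u_i$ fails to $H$-absorb the trap vertex $t_i$ (or $u_{i-1}$, or whichever vertex is designed to witness the failure): the only way to leave that vertex is via a colour $c$ with $(x_{?},c)\notin A_H$, so no $H$-path of length $\ge 2$ can start there, and the single arc available does not reach $u_i$. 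This contradiction shows $D$ has no $H$-kernel, hence $H\notin\tilde{\mathscr{B}}_3$.

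The main obstacle is getting the gadget exactly right so that \emph{every} candidate independent set fails absorbance \emph{simultaneously} — in particular, ruling out $H$-kernels that are not contained in the core, and handling the fact that $D$ must be loopless even though $H$ is reflexive (so a ``stay on the same colour'' step in $W$ must be simulated by genuine distinct vertices). I expect to spend most of the care on the case analysis showing no single $u_i$ can be a kernel: for each $i$ I must exhibit a specific vertex that $u_i$ neither contains nor $H$-absorbs, and this is where hypotheses (1) and (2) are used in tandem — (1) supplies, at position $i$, a forbidden outgoing colour blocking long $H$-paths, and (2) ensures the cyclic structure does not accidentally provide an alternative $H$-path back to $u_i$. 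Once the gadget is pinned down, the verification is routine; the argument is essentially identical to the walks case since it only ever uses $H$-paths that are in fact $H$-walks realised by internally disjoint vertices, so ``by paths'' and ``by walks'' coincide on these particular $D$.
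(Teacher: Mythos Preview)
The paper does not actually supply its own proof of this lemma: it attributes the result to Arpin and Linek and simply remarks that their construction, given for $H$-walks, works verbatim for $H$-paths. Your final paragraph captures exactly why this transfer is free --- on the Arpin--Linek digraph every relevant $H$-walk is already an $H$-path --- so your high-level strategy matches the paper's (i.e., Arpin and Linek's) approach.

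However, the concrete gadget you sketch does not do the job, and the error is not cosmetic. You give each trap vertex $t_j$ out-degree zero and then assert that ``absorbance forces the kernel into the core''. This is backwards: a sink can never be $H$-absorbed, so every sink must lie in every $H$-kernel. In your $D$ the set $\{t_0,\dots,t_{k-1}\}$ is $H$-independent (they are all sinks) and already absorbs each $u_j$ with $0\le j\le k-1$ via the single arc $(u_j,t_j)$. The only vertex it could fail to absorb is $u_k$; but then $\{t_0,\dots,t_{k-1},u_k\}$ is still $H$-independent (the unique out-arc of $u_k$ is $(u_k,u_0)$ with colour $x_k$, and from $u_0$ you are blocked both toward $u_1$ by $(x_k,x_0)\notin A_H$ and, in this case, toward $t_0$) and it is absorbent. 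Either way your $D$ has an $H$-kernel, so it cannot witness $H\notin\tilde{\mathscr{B}}_3$.

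The Arpin--Linek digraph avoids this by not creating sinks at all: the auxiliary $c_j$-coloured arcs are woven into a single odd cycle (every vertex has out-degree one), with the forbidden pairs $(x_j,c_j)$ and $(x_k,x_0)$ placed so that $H$-reachability along the cycle is short enough to mimic an ordinary odd cycle without a kernel. If you want to repair your plan, drop the pendant sinks and route the $c_j$-arcs back onto the cycle (or simply follow the original construction); after that, the verification is indeed routine and identical for walks and paths, as you anticipated.
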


As in \cite{arpinDM307}, Lemma
\ref{arpLinNotB3} has the following
simple consequence dealing with four
of the remaining non-transitive
patterns on three vertices.

\begin{proposition}
None of the patterns in Figure \ref{fig 5}
is in a panchromatic pattern by paths.
\end{proposition}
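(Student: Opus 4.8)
The plan is to apply Lemma \ref{arpLinNotB3} to each of the four reflexive patterns $H$ depicted in Figure \ref{fig 5}; since that lemma yields $H \notin \tilde{\mathscr{B}}_3$, this gives at once that none of them is a panchromatic pattern by paths. The observation that makes this work is that each of these four patterns is reflexive and non-transitive, hence contains an \emph{asymmetric} arc, that is, an arc $(u,v)\in A_H$ with $(v,u)\notin A_H$; what we actually need is such an arc whose tail $u$ is, in addition, not a universal vertex of $H$ (equivalently, $N^+_H(u)\neq V_H$).

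Concretely, I would proceed as follows. Fix an asymmetric arc $(u,v)$ of $H$ with $u$ non-universal, and let $w$ be the third vertex of $H$, chosen so that $(u,w)\notin A_H$. Then apply Lemma \ref{arpLinNotB3} to the length-one walk $W=(x_0,x_1)=(u,v)$ of $H$: hypothesis (1) holds because $c_0:=w$ satisfies $(x_0,c_0)=(u,w)\notin A_H$, and hypothesis (2) holds because $(x_1,x_0)=(v,u)\notin A_H$. The lemma then gives $H\notin\tilde{\mathscr{B}}_3$. Thus the whole argument reduces to exhibiting, for each of the four patterns in Figure \ref{fig 5}, one asymmetric arc with non-universal tail; this is a short, pattern-by-pattern inspection (each such arc is immediately readable off the figure), and I would simply list the four choices.

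The step I expect to require the most care is precisely this last check, and it is also the reason the present result covers only four of the seven non-transitive patterns on three vertices. If a pattern happened to have every asymmetric arc incident (as a tail) to a universal vertex --- equivalently, if every walk of $H$ that fails to close up had to start at, or pass through, a universal vertex --- then hypothesis (1) of Lemma \ref{arpLinNotB3} could not be met in this direct way, and the pattern would fall outside the scope of this argument. This is exactly what happens for the remaining three non-transitive reflexive patterns on three vertices, which is why they are postponed to Sections \ref{sec:F4F5} and \ref{sec:F1}. For the four patterns considered here, however, a suitable arc is always available, so no real obstacle arises: once the four arcs are exhibited, Lemma \ref{arpLinNotB3} does the rest.
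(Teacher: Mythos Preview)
Your proposal is correct and follows essentially the same route as the paper: both rely on Lemma \ref{arpLinNotB3}. The only cosmetic difference is that the paper exhibits a single explicit $H$-arc-coloured digraph $D$ (drawn in Figure \ref{fig 5}) that simultaneously lacks an $H$-kernel for all four patterns, noting that this $D$ is precisely the one produced by the construction in the proof of Lemma \ref{arpLinNotB3}; you instead invoke the lemma directly, pattern by pattern, via a length-one walk $(u,v)$.
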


\begin{proof}
The digraph $D$ In Figure \ref{fig 5},
does not contain an $H$-kernel for any
of the four patterns $H$ on three vertices
found in the same figure.   It is worth
noticing that the digraph $D$ was obtained
using the construction given by the proof
of Lemma \ref{arpLinNotB3}.
\end{proof}

\begin{figure}[!htb]
\begin{center}
\includegraphics[scale=0.7]{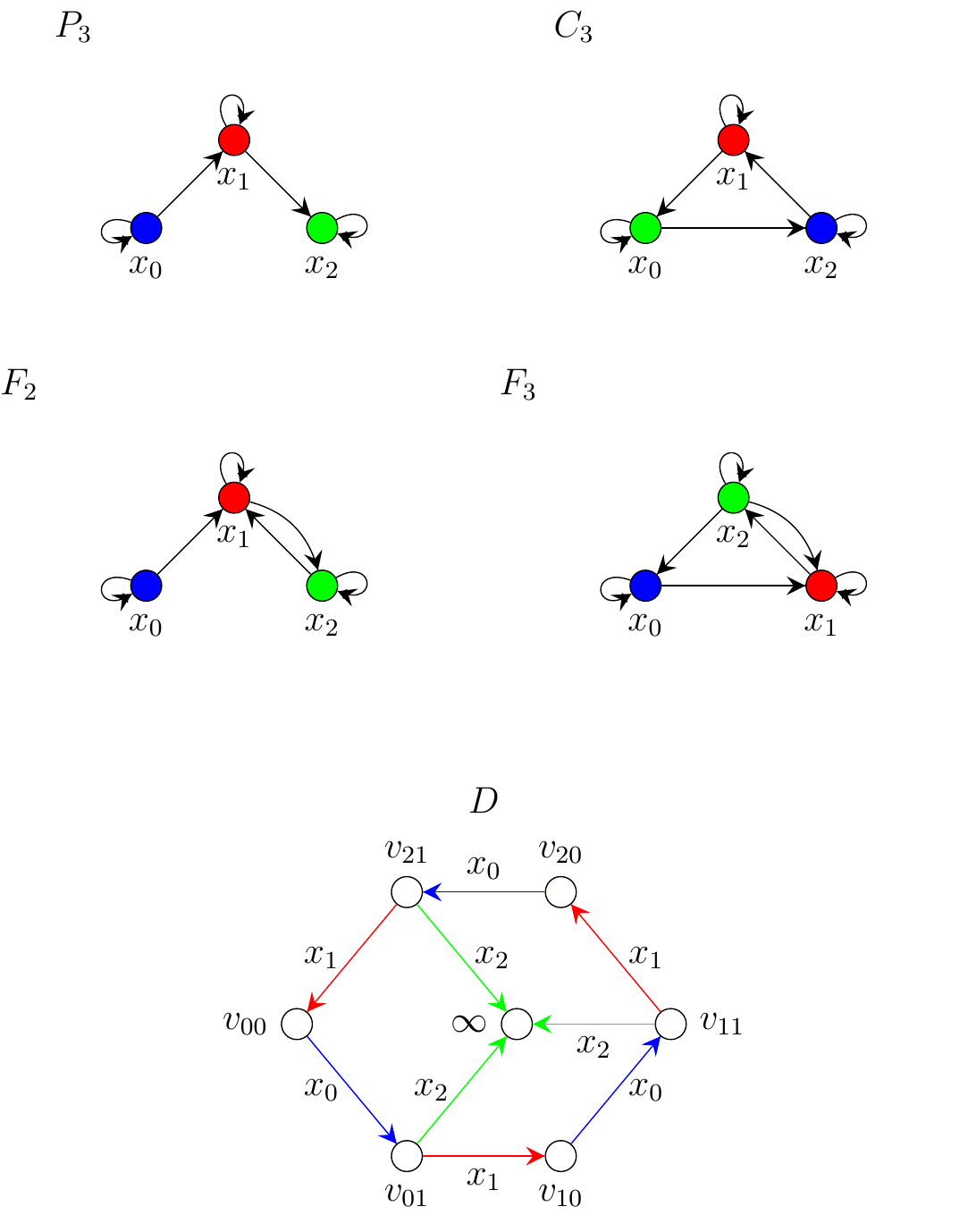}
\caption{Some patterns which are not
panchromatic patterns by
paths.}
\label{fig 5}
\end{center}
\end{figure}

Our following lemma states that
contraction of true twins and its
inverse operation, blowing up vertices
to complete reflexive digraphs,
preserve panchromatic patterns by paths.

\begin{lemma}\label{true twins}
Let $H$ be a reflexive pattern, and
let $x$ and $y$ be a pair of true
twins in $H$.   Then, $H$ is a
panchromatic pattern by paths if
and only if $H_{xy}$ is.
\end{lemma}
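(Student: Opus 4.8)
The plan is to prove the two implications separately, each by a suitable ``lifting'' of an $H$-arc-colouring to an $H_{xy}$-arc-colouring (or vice versa) on the \emph{same} underlying digraph $D$, in such a way that $H$-paths and $H_{xy}$-paths correspond and $H$-kernels are preserved. Throughout I would exploit the fact, recorded in the excerpt, that $H_{xy}$ is isomorphic to an induced subdigraph of $H$ (obtained by deleting $y$), so one direction is already almost free.

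First I would prove that if $H$ is a panchromatic pattern by paths, then so is $H_{xy}$. Since $H_{xy}$ is isomorphic to an induced subdigraph of $H$, this is immediate from Lemma \ref{prop Bi}.2: the class $\tilde{\mathscr{B}}_3$ is closed under taking induced subdigraphs. So no work is needed here beyond citing that lemma and the observation preceding it that $H_{xy} \cong H - y$.

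The substance is the converse: assuming $H_{xy} \in \tilde{\mathscr{B}}_3$, show $H \in \tilde{\mathscr{B}}_3$. Let $D$ be an arbitrary $H$-arc-coloured digraph with colouring $c \colon A_D \to V_H$. I would define a new colouring $c'$ by replacing every occurrence of the colour $y$ with the colour $x$ (leaving all other colours untouched), so $c'$ is an $H_{xy}$-arc-colouring of $D$ (identifying $V_{H_{xy}}$ with $V_H \setminus \{y\}$). The key claim is that for any vertices $u,v$ of $D$, a path $P$ of $D$ is an $H$-path under $c$ if and only if it is an $H_{xy}$-path under $c'$. This is where the true-twin hypothesis does its job: since $N^+_H(x)=N^+_H(y)$ and $N^-_H(x)=N^-_H(y)$, a colour change from colour $a$ to colour $b$ is an arc of $H$ precisely when the corresponding change is an arc of $H_{xy}$ after merging $x$ and $y$ — one checks the four cases according to whether $a$ and $b$ are in $\{x,y\}$ or not, using the twin identities to handle the cases where $y$ appears. (The reflexivity of $H$, guaranteed by Lemma \ref{prop Bi}.1, ensures the contracted loop at the merged vertex is present so that consecutive arcs both coloured $x$ or $y$ remain legal.) Consequently $u$ reaches $v$ by $H$-paths under $c$ iff $u$ reaches $v$ by $H_{xy}$-paths under $c'$; hence a subset $K \subseteq V_D$ is an $H$-kernel of $(D,c)$ iff it is an $H_{xy}$-kernel of $(D,c')$. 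Since $H_{xy} \in \tilde{\mathscr{B}}_3$, the digraph $(D,c')$ has an $H_{xy}$-kernel $K$, and that same $K$ is an $H$-kernel of $(D,c)$. As $D$ was arbitrary, $H \in \tilde{\mathscr{B}}_3$.

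I expect the only delicate point to be the case analysis verifying that the recolouring $c \mapsto c'$ sends $H$-paths exactly to $H_{xy}$-paths; everything else is bookkeeping. The subtlety is purely local — it concerns a single colour change $(c(e_{i-1}), c(e_i))$ at an internal vertex of a path — so it reduces to: for all $a,b \in V_H$, writing $\bar a$ for $a$ with $y$ replaced by $x$, one has $(a,b) \in A_H \iff (\bar a,\bar b) \in A_{H_{xy}}$. The forward direction uses that deleting $y$ (and identifying it with $x$) cannot destroy an arc between the images, using $N^\pm(x)=N^\pm(y)$ to relocate an endpoint equal to $y$ onto $x$; the backward direction reverses this, again invoking the twin equalities and reflexivity. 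Once this equivalence is in hand, the correspondence of $H$-paths, of reachability, and of kernels is automatic, and the proof closes.
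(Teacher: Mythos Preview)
Your proposal is correct and follows essentially the same approach as the paper: both recolour $D$ by merging the colours $x$ and $y$, invoke the hypothesis to obtain an $H_{xy}$-kernel $K$, and then argue that $K$ is also an $H$-kernel via the twin identities. The only difference is presentational: you isolate the local equivalence $(a,b)\in A_H \iff (\bar a,\bar b)\in A_{H_{xy}}$ and deduce at once that $H$-paths and $H_{xy}$-paths coincide, whereas the paper verifies the two directions of this equivalence separately inside the independence and absorbence arguments.
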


\begin{proof}
Since $H_{xy}$ is isomorphic to $H - y$,
an induced subdigraph of $H$, it follows
from Lemma \ref{prop Bi} that if $H \in
\tilde{\mathscr{B}_3}$, then, also $H_{xy}
\in \tilde{\mathscr{B}}_3$.

So, assume that $H_{xy}$ is a panchromatic
pattern by paths and let $D$ be an
$H$-arc-coloured digraph with colouring $c$.
We will abuse notation to consider the
digraph $D$ with two different colourings
of its arc set, $c$ and $c_{xy}$, where the
latter is an $H_{xy}$-arc-colouring of $D$
defined as follows: \[
c_{xy}((u,v))=
\left \{ \begin{array}{ll}
c ((u,v)) & \text{ if }c ((u,v))
           \ne x \text{ and } c ((u,v))
           \ne y \\
v_{xy}    & \text{ if } c ((u,v)) = x
          \text{ or }c ((u,v))=y.
\end{array}
\right.
\]

By hypothesis, $D$ has an $H_{xy}$-kernel,
$K$. We will prove that $K$ is also an
$H$-kernel of $D$.

To verify the $H$-independence, let $u$
and $v$ be different vertices in $K$.
Proceeding by contradiction, suppose that
there is an $H$-path from $u$ to $v$ in
$D$, say $P=(u = w_0, w_1, \dots, w_n =
v)$. Since $K$ is an $H_{xy}$-kernel of
$D$, then $P$ is not an $H_{xy}$-path in
$D$.   So, it follows that there is an arc
$(w_j, w_{j+1})$ in $A(P)$ such that
$c((w_j, w_{j+1})) = x$ or $c((w_j,
w_{j+1})) = y$; assume without loss of
generality that $c ((w_j, w_{j+1})) = x$.
By definition of $c_{xy}$, we have that
$c_{xy}((w_j, w_{j+1})) = v_{xy}$. Since
$H_{xy}$ is reflexive, the length of $P$
must be greater than one, otherwise $P$
would be an $H_{xy}$-path in $D$.  If $j
> 0$, then there is a vertex $z$ in $V_H$
such that $(z, x) \in A_H$ and
$(c((w_{j-1}, w_j)), c((w_j, w_{j+1})))
= (z, x)$.   So, by the definition of
$H_{xy}$ we have that $(c_{xy}((w_{j-1},
w_j)), c_{xy}((w_j, w_{j+1}))) = (z,
v_{xy}) \in A(H_{xy})$.   Analogously,
if $j < n-1$, there is $z'$ in $V_H$
such that $(c_{xy}((w_j, w_{j+1})),
c_{xy}((w_{j+1}, w_{j+2}))) = (v_{xy},
z') \in A(H_{xy})$.   Notice that either
$z$ or $z'$ could be $x$ or $y$, but
$(v_{xy}, v_{xy})$ is also an arc of
$H_{xy}$.   Therefore, we conclude that
$P$ is an $H_{xy}$-path in $D$,
contradicting that $K$ is an $H_{xy}$-kernel
of $D$. Therefore, $K$ is $H$-independent.

To prove that $K$ is $H$-absorbent, let
$u$ be a vertex in $V_D-K$. Since $K$ is
an $H_{xy}$-kernel of $D$, then there
exists $v \in K$ such that there is an
$H_{xy}$-path from $u$ to $v$ in $D$, say
$P = (u = w_0, w_1,\dots,w_n = v)$. We
will show that $P$ is also an $H$-path
of $D$. If $c_ {xy}((w_j, w_{j+1}))
\ne v_{xy}$, for $0 \le j \le n-1$,
then $P$ is trivially an $H$-path of
$D$. Suppose there exists $(w_i, w_{i+1})
\in A(T) $ such that $ c_{xy}((w_i,
w_{i+1})) = v_{xy}$. By definition of
$c_ {xy}$, we have that $c ((w_i, w_{i+1}))
= x$ or $c ((w_i, w_{i+1})) = y$, but
since $x$ and $y$ are true twins, we can
assume without loss of generality that
$c((w_i, w_{i+1})) = x$. Since $P$ is an
$H_{xy}$-path of $D$ and by the definition
of $H_{xy}$, it follows that, if $i > 0$,
then $(c ((w_{i-1}, w_i)), c((w_i, w_{i+1})
= x))$ is an arc of $H$ and if $i < n-1$,
then $(x = c ((w_i, w_{i+1})), c((w_{i+1},
w_{i+2})))$ is an arc of $H$. Therefore,
$P$ is an $H$-path of $D$ and hence, $K$
is an $H$-absorbent set of $D$.

We conclude that $K$ is an $H$-kernel of $D$.
\end{proof}

So far, we have dealt with thirteen of the
sixteen reflexive patterns on three vertices.
The remaining three patterns are harder to
deal with.   The following two sections are
devoted to patterns $F_4$, $F_5$ and $F_1$.


\section{The patterns $F_4$ and $F_5$}
\label{sec:F4F5}

In \cite{galeanaDM339}, it is proved that
patterns $F_4$ and $F_5$ are not panchromatic
patterns.   Nonetheless, while adapting the
proof for the $H$-kernel case, we noticed that
the proof of the key lemma to prove this fact
in \cite{galeanaDM339} is flawed.   At this
moment it is neither known whether $F_4 \in
\mathscr{B}_3$ nor whether $F_5 \in
\mathscr{B}_3$.   Despite this fact, a techique
similar to the one proposed in
\cite{galeanaDM339} can be used to prove that
$F_4, F_5 \notin \tilde{\mathscr{B}_3}$.

\begin{figure}[!htb]
\begin{center}
\includegraphics[scale=0.8]{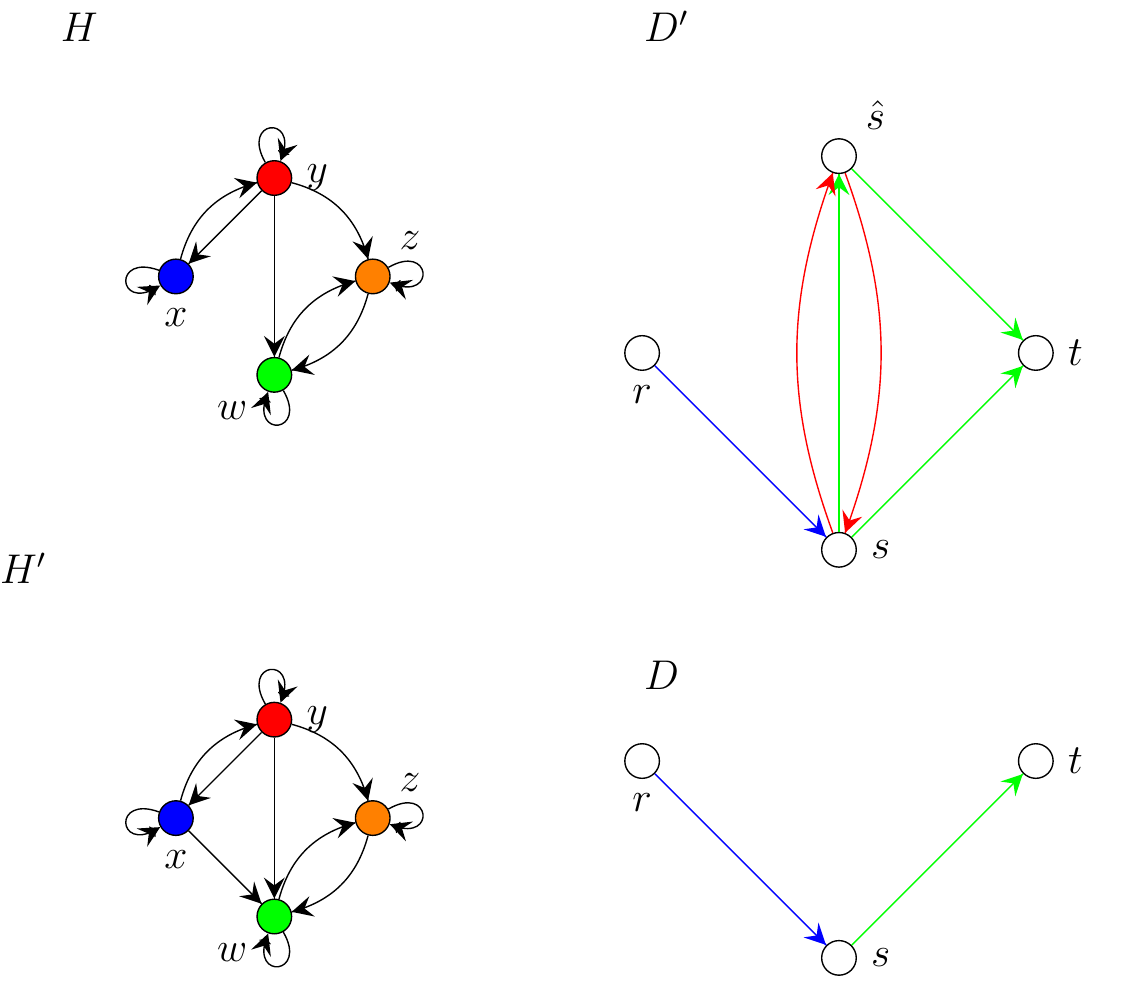}
\caption{The patterns $H$ and $H'$ in Theorem
         \ref{F4}.}
\label{FigHandH'}
\end{center}
\end{figure}

\begin{lemma} \label{F4}
The pattern $H$ in Figure \ref{FigHandH'}
is not a panchromatic pattern by paths.
\end{lemma}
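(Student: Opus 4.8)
The plan is to construct a single $H$-arc-coloured digraph $D$ that has no $H$-kernel, thereby witnessing $H \notin \tilde{\mathscr{B}}_3$. Since Lemma~\ref{arpLinNotB3} (the path-analogue of the Arpin--Linek obstruction) only rules out patterns via a walk $W$ in $H$ whose internal vertices all miss some colour and whose endpoints are non-adjacent, and since $F_4$ and $F_5$ evidently fail to yield such a walk (otherwise they would already have been handled in the previous section), the construction must be genuinely new; this is why the authors say ``a technique similar to the one in \cite{galeanaDM339}'' is needed rather than the elementary argument. First I would read off from Figure~\ref{FigHandH'} exactly which arcs $H$ has (it is one of the three hard order-three patterns, blown up or with an extra vertex), identify for each vertex $x$ of $H$ a colour $c_x$ with $(x, c_x) \notin A_H$, and locate the ``problematic'' non-edges of $H^c$ that force independent-by-$H$-paths sets in $D$ to be small.

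The core of the construction is to build $D$ so that, on the one hand, every vertex has out-going $H$-paths reaching a small ``core'' of would-be kernel vertices, while on the other hand the colouring makes any two candidate kernel vertices mutually reachable by $H$-paths. Concretely I would take a short directed cycle (length $3$ or a small even length, chosen to interact badly with $H^c$) as the skeleton, colour its arcs so that consecutive colours never form an arc of $H$ — so that the only $H$-paths along the cycle are single arcs — and then attach, to each vertex of the cycle, a gadget (a directed path or a ``back-and-forth'' $P_3$-like configuration $(a, b_1, b_2, b_1, a')$ as in Figures~\ref{Fig1}--\ref{Fig2}) coloured so that it provides the $H$-path reachability needed for absorbance but destroys independence. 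The argument then splits into two claims: (i) every independent-by-$H$-paths subset of $D$ has at most one vertex in each gadget/cycle-vertex class, forcing any $H$-absorbent independent set to contain essentially one vertex, and (ii) no single vertex of $D$ can be $H$-absorbent, because the cycle skeleton coloured to block $H$-paths means a single vertex cannot reach the rest. Combining (i) and (ii) gives that $D$ has no $H$-kernel.

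The main obstacle will be step (i): verifying that the colouring really does prevent two vertices of the intended kernel from being $H$-independent, i.e. exhibiting an explicit $H$-path (not merely an $H$-walk) between every pair of candidate kernel vertices. Because $H$-paths cannot be shortened or concatenated freely — this is precisely the subtlety flagged throughout Section~\ref{sec:basic} and Section~\ref{sec:trans} — one must track the first and last colours of each path segment and check at each junction that the colour change lies in $A_H$, using the specific non-transitive structure of $F_4$/$F_5$ to route through a vertex $x$ with a loop $(x,x) \in A_H$ whenever a direct colour change is unavailable. A secondary technical point is making sure the gadgets do not accidentally create an $H$-path that lets some vertex absorb the whole core, which would resurrect a kernel; this is handled by choosing the gadget colours among the colours $c_x$ that are ``dead ends'' in $H$. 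I would present the explicit digraph and colouring in a figure, then prove the two claims by finite case analysis on the (few) colour patterns arising along the relevant paths, mirroring the bottom-right gadget of Figure~\ref{transFor}. Finally, Lemma~\ref{F4} as stated only concerns $H$; the companion pattern $H'$ and the patterns $F_4,F_5$ themselves would then follow by Lemma~\ref{true twins} (true-twin contraction/blow-up) together with Lemma~\ref{prop Bi}(2) on induced subdigraphs, so I would phrase the construction to be robust under those reductions.
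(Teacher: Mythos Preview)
Your proposal has a genuine gap: you are planning a \emph{direct} construction of a single $H$-arc-coloured digraph $D$ with no $H$-kernel, but the paper explicitly records (Problem~\ref{prob:F4F5} in the Conclusions) that no such explicit example is known and that finding one is open. The proof of Lemma~\ref{F4} in the paper is \emph{non-constructive}: it is a reduction, not an example. The pattern $H'$ in Figure~\ref{FigHandH'} is not a ``companion pattern'' to be derived afterwards from the $H$ case via true twins, as you suggest in your last paragraph; rather, $H'$ is the \emph{input} to the argument. One first observes that $H'$ contains an induced subdigraph (on $\{x,z,w\}$) already known not to be in $\tilde{\mathscr{B}}_3$, so some $H'$-coloured digraph $D$ has no $H'$-kernel. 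The heart of the proof is then a gadget construction $D \mapsto D'$ (inserting, for each $x$-$w$ coloured $2$-path $(r,s,t)$ in $D$, a new vertex $\hat{s}$ with appropriately coloured arcs) together with a verification that any $H$-kernel $K'$ of $D'$ yields an $H'$-kernel of $D$ via $K = (K' \cup \{s : \hat{s}\in K'\}) \setminus \hat{S}$. Contraposing, $D'$ has no $H$-kernel; only then does Lemma~\ref{true twins} enter, to pass from the four-vertex $H$ down to $F_4$.

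Concretely, your outline would fail at step~(i)/(ii): the non-transitivity of $H$ means that the ``short cycle plus gadgets'' template from Figures~\ref{Fig1}--\ref{fig 5} does not obviously produce a digraph where every $H$-independent set is a singleton \emph{and} no singleton is $H$-absorbent, and you give no candidate colouring to check. The delicate point the paper's reduction handles --- and which your plan does not address --- is that the missing arc of $H$ (namely $(x,w)$) can be ``bridged'' inside $D'$ by routing through the auxiliary vertex $\hat{s}$ via colour $y$, so that every $H'$-path in $D$ lifts to an $H$-path in $D'$; this is what makes the kernel transfer go through without ever exhibiting $D'$ explicitly.
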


\begin{proof}
Consider the digraphs $H$ and $H'$ as in
Figure \ref{FigHandH'}.   We start by showing
that for every $H'$-coloured digraph $D$ there
is an $H$-coloured multidigraph $D'$, such that
if $D'$ has an $H$-kernel, then $D$
has a kernel by $H'$-paths. Let $D$ be an
$H'$-coloured digraph and construct $D'$ from $D$
as follows. For each path $(r,s,t)$ in $D$ with
arcs $(r,s)$ and $(s,t)$ coloured by $x$ and $w$,
respectively, we add a new vertex $\hat{s}$, with
two arcs from $s$ to $\hat{s}$, one of them
coloured $y$ and the other coloured $w$, also,
an arc $(\hat{s},s)$ coloured $y$ and an arc
$(\hat{s},t)$ coloured $w$. Let $\hat{S}$ be
the set of all those new vertices $\hat{s}$
in $D'$. (See Figure \ref{FigHandH'}.)

Suppose $K'\subseteq V_{D'}$ is a kernel by
$H$-paths in $D'$. Consider the set $K$
defined as \[
K = ( K' \cup \setdef{s \in V_D}{\hat{s}
      \in K'} ) - \hat{S}.
\]

We affirm that $K$ is an $H'$-kernel in $D$.
For the $H'$-independence in $D$, consider
vertices $u$ and $v$ in $K$. Proceeding by
contradiction suppose that there is an
$H'$-path $P = (u = u_0, u_1, \ldots, u_n
= v)$ from $u$ to $v$ in $D$. Note that $P$
is a path in $D'$. If $P$ is not an $H$-path
in $D'$, then by construction there is at
least one subpath $(r,s,t)$ of $P$ such that
$(r,s)$ has colour $x$ and $(s,t)$ has colour
$w$. Consider the $H$-path $(r, s, \hat{s},
t)$ with $(r,s)$ coloured $x$, $(s,\hat{s})$
coloured by $y$ and $(\hat{s},t)$ coloured by
$w$. Let $P'$ be the path that results from
making such a replacement for each path with
form $(r,s,t)$ with the respective $(r, s,
\hat{s}, t)$. By construction $P'$ is an
$H$-path from $u$ to $v$ in $D'$. We have
4 cases.

{\em Case 1.} $\{ u,v\} \subseteq K' -
\setdef{s \in V_D}{\hat{s} \in K'}$.

It follows that $P$ or $P'$ is an
$H$-path between two vertices of $K'$
in $D'$, contradicting the independence
by $H$-paths of $K'$.

{\em Case 2.} $u \in \setdef{s \in V_D}{
\hat{s} \in K'}$ and $v \in K' - \setdef{
s \in V_D}{\hat{s} \in K'}$.

As $N^+_H(y) = V_H$, then adding the arc
$(\hat{s},s=u)$ with colour $y$ at the
beginning of $P$ (or $P'$) results in an
$H$-path between two vertices of $K'$ in
$D'$, contradicting the independence by
$H$-paths of $K'$.

{\em Case 3.} $u \in K' - \setdef{s \in
V_D}{\hat{s}\in K'}$ and $v \in \setdef{
s \in V_D}{\hat{s} \in K'}$.

As $N^-_H(y) = \set{x, y}$ and $N^-_H(w)
= \set{z, w}$, then adding the arc $(s,
\hat{s})$ with colour $y$, if $(u_{n-1},
u_n = v)$ is coloured $x$ or $y$, at the
end of $P$ (or $P'$), or adding the arc
$(s, \hat{s})$ with colour $w$, if
$(u_{n-1}, u_n = v)$ is coloured $z$ or
$w$, at the end of $P$ (or $P'$) results
in an $H$-path between two vertices of
$K'$ in $D'$, contradicting the
$H$-independence of $K'$.

{\em Case 4.} $\set{u, v} \subseteq \setdef{
s \in V_D}{\hat{s} \in K'}$.

As $N^+_H(y) = V_H$, $N^-_H(y) = \set{x, y}$
and $N^-_H(w) = \set{z,w}$, then adding the
arc $(\hat{s}, s = u)$ coloured $y$ at the
beginning of $P$ (or $P'$) and adding the
arc $(s, \hat{s})$ with colour $y$, if
$(u_{n-1}, u_n = v)$ is coloured $x$ or $y$,
at the end of $P$ (or $P'$), or adding the
arc $(s, \hat{s})$ with colour $w$, if
$(u_{n-1}, u_n = v)$ is coloured $z$ or $w$,
at the end of $P$ (or $P'$) results in an
$H$-path between two vertices of $K'$ in
$D'$, contradicting the $H$-independence $K'$.

To show that $K$ is $H'$-absorbent in $D$.
Consider $u \in V_D - K$. By definition of
$K$, we have $u \in V_{D'} - K'$. Since $K'$
is an $H$-kernel in $D'$ there is
$v\in K'$ such that $u$ reaches $v$ by
$H$-paths in $D'$. Let $P = (u = u_0, u_1,
\ldots, u_n = v)$ be an $H$-path in $D'$.
Suppose that $v \notin \hat{S}$. If $\hat{s}
\in V(P)$, then $\hat{s} = u_i$ for some
$i \in \set{1, \ldots, n-1}$, by construction
of $D'$, it follows that $s = u_{i-1}$, then
$(u_{i-1}, u_{i+1}) \in A_D$ with colour $w$,
even more, $(u_i, u_{i+1})$ has colour $w$,
thus, since $N^-_{H'}(w) = V_{H'}$, $(u_{i-2},
u_{i-1}, u_{i+1}, u_{i+2})$ is an $H'$-path.
It follows that removing all $\hat{s}$ of
$P$ an $H'$-path from $u$ to $v$ in $D$ is
obtained. If $v \in \hat{S}$, then $u_{n-1}
\in K$ hence the above procedure gives us
an $H'$-path from $u$ to $K$ in $D$. Thus,
$K$ is absorbent by $H'$-paths in $D$ and
therefore $K$ is a kernel by $H'$-paths in
$D$.

Notice that the subdigraph of $H'$ induced
by $\set{x, z, w}$ does not belong to
$\hat{\mathscr{B}}_3$, and hence, $H'
\notin \hat{\mathscr{B}}_3$. Therefore,
there is an $H'$-coloured digraph $D$ such
that $D$ has no $H'$-kernel. Let $D'$ be
constructed as before. If $H \in
\hat{\mathscr{B}}_3$, then $D'$ has a kernel
by $H$-paths, thus by the previous argument
$D$ has a kernel by $H'$-paths, which cannot
happen. Therefore $H \notin
\hat{\mathscr{B}}_3$.
\end{proof}

We will say that a vertex $z\in V_H$ is {\em
universal} if and only if $N^-_H(z) =
N^+_H(z) = V_H$.

The following lemma is very similar in
statement and proof to the previous one.

\begin{lemma} \label{F5}
The pattern $H$ in Figure \ref{FigLemma6}
is not a panchromatic pattern by paths.
\end{lemma}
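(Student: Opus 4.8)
The plan is to mimic the proof of Lemma \ref{F4} almost verbatim, with the pattern $F_5$ playing the role of $H$ and a suitable companion pattern playing the role of $H'$. First I would fix notation: let $H$ be the pattern $F_5$ in Figure \ref{FigLemma6}, and let $H'$ be an auxiliary pattern on a few more vertices, chosen so that (a) some induced subdigraph of $H'$ is already known not to be a panchromatic pattern by paths (e.g.\ one of the order-three patterns handled in the previous section, or the pattern from Lemma \ref{F4}), and (b) the ``gadget'' construction below can translate kernels by $H$-paths of the gadget into kernels by $H'$-paths of an arbitrary $H'$-coloured digraph. As in Lemma \ref{F4}, the precise choice of $H'$ is dictated by exactly which colour-change sequences are forbidden in $H'$ but become available after inserting the auxiliary vertex; the figure is supposed to make this explicit, and I would state the in- and out-neighbourhoods $N^{\pm}_{H}(\cdot)$ and $N^{\pm}_{H'}(\cdot)$ of the relevant vertices up front, since every case check below is just a one-line verification against those lists.

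Next I would carry out the gadget construction. Given an $H'$-coloured digraph $D$, build $D'$ by locating every length-two subpath $(r,s,t)$ whose two arcs carry the pair of colours that is forbidden in $H$ (the analogue of the $(x,w)$ pair in Lemma \ref{F4}), and for each such $s$ add a new vertex $\hat{s}$ together with the same packet of arcs between $\{s,\hat s\}$ and $t$ as in the previous proof, coloured so that the detour $(r,s,\hat s,t)$ is a legal $H$-path while the arcs incident to $\hat s$ also let $\hat s$ absorb/be-absorbed appropriately. Let $\hat S$ be the set of added vertices. Then, assuming $K'$ is a kernel by $H$-paths of $D'$, define $K=(K'\cup\{s\in V_D : \hat s\in K'\})-\hat S$ exactly as before, and prove $K$ is a kernel by $H'$-paths of $D$. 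The $H'$-independence argument splits into the same four cases according to which of $u,v$ lie in $\{s : \hat s\in K'\}$; in each case one takes an $H'$-path $P$ in $D$ between two vertices of $K$, replaces each forbidden $(r,s,t)$ subpath by $(r,s,\hat s,t)$ to get an $H$-path $P'$ in $D'$, and then, using the neighbourhood facts about the auxiliary vertex (the roles of $y$ and $w$ in Lemma \ref{F4}), prepends/appends the arcs incident to $\hat u$ and/or $\hat v$ to obtain an $H$-path in $D'$ between two vertices of $K'$, contradicting the $H$-independence of $K'$. For $H'$-absorbance one takes, for $u\in V_D-K$, an $H$-path from $u$ to some $v\in K'$ in $D'$, and argues that deleting the $\hat s$'s from it (shortcutting $(u_{i-1},\hat s,u_{i+1})$ to $(u_{i-1},u_{i+1})$, which is legal because the relevant in-neighbourhood is all of $V_{H'}$) yields an $H'$-path from $u$ into $K$ in $D$; if $v\in\hat S$ one instead lands at $u_{n-1}\in K$.

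Finally I would close the argument exactly as in Lemma \ref{F4}: pick an induced subdigraph of $H'$ that is already known (from Section \ref{sec:ppp}, or from Lemma \ref{F4}) not to be in $\tilde{\mathscr{B}}_3$, so by Lemma \ref{prop Bi}.2 there is an $H'$-coloured digraph $D$ with no $H'$-kernel; build $D'$ from it; if $H=F_5$ were a panchromatic pattern by paths then $D'$ would have a kernel by $H$-paths, which by the translation above would give $D$ a kernel by $H'$-paths — a contradiction. Hence $F_5\notin\tilde{\mathscr{B}}_3$.

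I expect the main obstacle to be choosing $H'$ and colouring the gadget arcs so that the four independence cases \emph{and} the absorbance argument all go through simultaneously — in Lemma \ref{F4} this worked because $N^+_H(y)=V_H$, $N^-_H(y)=\{x,y\}$, $N^-_H(w)=\{z,w\}$ and $N^-_{H'}(w)=V_{H'}$ fit together perfectly, and for $F_5$ I would need the analogous coincidences among its (slightly different) neighbourhoods; if the obvious companion pattern does not supply them, the gadget may need an extra auxiliary vertex or a second forbidden colour-pair to patch, which is presumably why the authors flagged that the two lemmas are ``very similar'' rather than identical. The rest is the routine case-by-case neighbourhood bookkeeping that the figure is meant to make transparent.
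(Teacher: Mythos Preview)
Your plan is essentially the paper's own proof: the same gadget construction with auxiliary vertices $\hat{s}$, the same definition $K=(K'\cup\{s:\hat s\in K'\})-\hat S$, the same four-case independence check, the same absorbance shortcut, and the same closing contradiction via a non-panchromatic induced subdigraph of $H'$. Two small corrections are worth flagging. First, the pattern $H$ in Figure~\ref{FigLemma6} is \emph{not} $F_5$ itself but a four-vertex pattern on $\{x,y,z,w\}$ in which $z$ and $w$ are true twins; $F_5$ is only recovered afterwards via Lemma~\ref{true twins} (this is Lemma~\ref{notF4F5}). Second, your worry that the gadget might need to be more elaborate than in Lemma~\ref{F4} is misplaced: in this $H$ the vertex $y$ is universal, so the packet of arcs at $\hat s$ is actually \emph{simpler} (just $(s,\hat s)$ and $(\hat s,s)$ coloured $y$, plus $(\hat s,t)$ coloured $w$), and Cases~2--4 no longer need to split on the colour of the last arc of $P$.
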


\begin{proof}
Let $H$ and $H'$ be the digraphs depicted
in Figure \ref{FigLemma6}.   As in the
previous lemma, we will show that for
every $H'$-coloured digraph $D$ there
is an $H$-coloured multidigraph $D'$,
such that if $D'$ has an $H$-kernel,
then $D$ has an  $H'$-kernel. For an
$H'$-coloured digraph $D$, construct
$D'$ from $D$ as follows. For each path
$(r, s, t)$ in $D$ with arcs $(r, s)$
and $(s, t)$ coloured $x$ and $w$,
respectively, we add a new vertex
$\hat{s}$, the arcs $(s, \hat{s})$,
$(\hat{s}, s)$ coloured $y$ and the
arc $(\hat{s}, t)$ coloured $w$. Let
$\hat{S}$ be the set of all those new
vertices $\hat{s}$ in $D'$. (See Figure
\ref{FigLemma6}.)

Suppose $K' \subseteq V_{D'}$ is an
$H$-kernel in $D'$, and consider the
set $K$ defined as \[
K = (K' \cup \setdef{s \in V_D}{\hat{s}
\in K'}) - \hat{S}.
\]

We will show that $K$ is an $H'$-kernel
in $D$. For the $H'$-independence in $D$,
consider two different vertices $u$ and
$v$ in $K$. Proceeding by contradiction
suppose that there is an $H'$-path
$P = (u = u_0, u_1, \ldots, u_n = v)$
from $u$ to $v$ in $D$. Note that $P$
is a path in $D'$. If $P$ is not an
$H$-path in $D'$, by construction
there is at least on subpath $(r, s, t)$
of $P$ such that $(r, s)$ with colour
$x$ and $(s, t)$ with colour $w$.
Consider the $H$-path $(r, s, \hat{s},
t)$ with $(r, s)$ coloured $x$, $(s,
\hat{s})$ coloured $y$ and $(\hat{s},
t)$ coloured $w$. Let $P'$ be the path
that results from replacing each of
these $(r, s, t)$ paths by the respective
$(r, s, \hat{s}, t)$. By construction
$P'$ is an $H$-path from $u$ to $v$ in
$D'$. We have 4 cases.

{\em Case 1.} $\set{u, v} \subseteq K'
- \setdef{s \in V_D}{\hat{s} \in K'}$.

It follows that $P$ or $P'$ is an
$H$-path between two vertices of $K'$
in $D'$, contradicting the independence
by $H$-paths of $K'$.

{\em Case 2.} $u \in \setdef{s \in V_D}{
\hat{s} \in K'}$ and $v \in K' - \setdef{
s \in V_D}{\hat{s} \in K'}$.

Since $y$ is a universal vertex, then
adding the arc $(\hat{s}, s = u)$ with
colour $y$ at the beginning of $P$ (or
$P'$) results in an $H$-path between
two vertices of $K'$ in $D'$,
contradicting the $H$-independence of
$K'$.

{\em Case 3.} $u \in K' - \setdef{s \in
V_D}{\hat{s} \in K'}$ and $v \in \setdef{
s \in V_D}{\hat{s} \in K'}$.

As in the previous case, $y$ is a universal
vertex, and thus, adding the arc $(s,
\hat{s})$ with colour $y$ at the end of $P$
(or $P'$) results in an $H$-path between
two vertices of $K'$ in $D'$, a
contradiction.

{\em Case 4.} $\set{u, v} \subseteq
\setdef{s \in V_D}{\hat{s} \in K'}$.

Again, $y$ is a universal vertex, so adding
the arc $(\hat{s}, s = u)$ coloured $y$ at
the beginning of $P$ (or $P'$) and adding
the arc $(s, \hat{s})$ coloured $y$ at the
end of $P$ (or $P'$) results in an $H$-path
between two vertices of $K'$ in $D'$, a
contradiction.

To show that $K$ is $H'$-absorbent in $D$,
consider $u \in V_D - K$. By definition of
$K$, we have $u \in V_{D'} - K'$. Since
$K'$ is an $H$-kernel in $D'$ there exists
$v \in K'$ such that $u$ reaches $v$ by an
$H$-path in $D'$. Let $P = (u = u_0, u_1,
\ldots, u_n = v)$ be an $H$-path in $D'$.
Suppose that $v \notin \hat{S}$. If there
is some vertex $s$ such that $\hat{s} \in
V(P)$, then $\hat{s} = u_i$ for some $i
\in \set{1, \ldots, n-1}$ and by the
construction of $D'$, it follows that $s
= u_{i-1}$, so $(u_{i-1}, u_{i+1})$ is an
arc of $D$ coloured $w$. Moreover, $(u_i,
u_{i+1})$ has colour $w$ and thus, since
$N^-_{H'}(w) = V_{H'}$, $(u_{i-2}, u_{i-1},
u_{i+1}, u_{i+2})$ is an $H'$-path. It
follows that by removing all $\hat{s}$
vertices of $P$ in this way, an $H'$-path
from $u$ to $v$ in $D$ is obtained. If $v
\in \hat{S}$, then $u_{n-1} \in K$ hence
the above procedure results in an $H'$-path
from $u$ to $K$ in $D$. Thus, $K$ is
absorbent by $H'$-paths in $D$ and
therefore $K$ is an $H'$-kernel in $D$.

Finally, proceeding by contradiction,
suppose that $H \in \tilde{\mathscr{B}}_3$.
Since $H \notin \tilde{\mathscr{B}}_3$,
then there exists an $H'$-coloured digraph
$D$ which does not have an $H'$-kernel.
Let $D'$ be constructed and coloured as
before. Since $H \in \tilde{\mathscr{B}}_3$
then $D'$ has an $H$-kernel. Therefore,
by the previous argument, $D$ would have
an ¡$H'$-kernel contradicting the second
sentence of this paragraph.

\end{proof}

\begin{figure}[!htb]
\begin{center}
\includegraphics[scale=0.8]{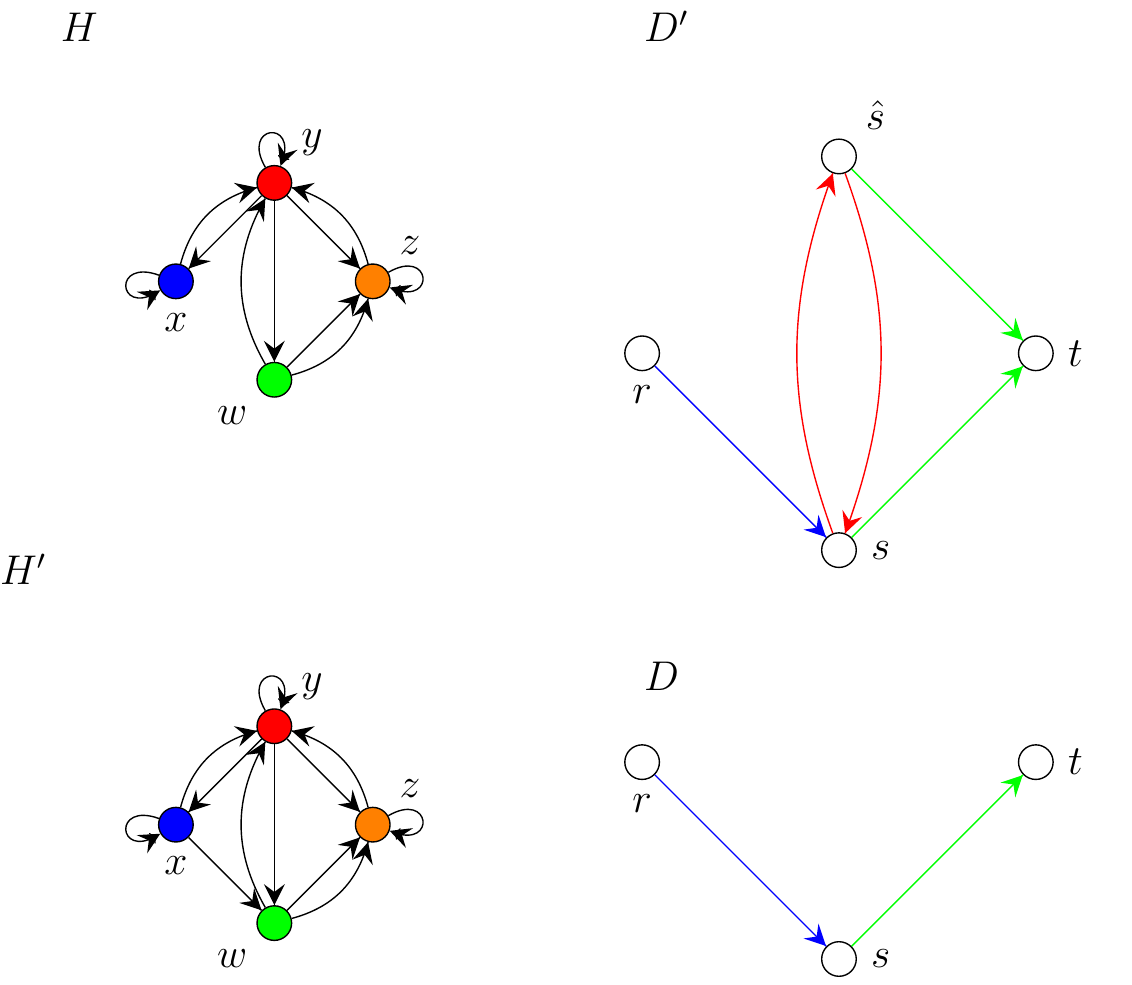}
\caption{The patterns $H$ and $H'$ in Theorem
         \ref{F5}.}
\label{FigLemma6}
\end{center}
\end{figure}

\begin{lemma} \label{notF4F5}
The patterns $F_4$ and $F_5$ (Figure
\ref{order3}) are not panchromatic
patterns by paths.
\end{lemma}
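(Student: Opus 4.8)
The plan is to derive Lemma \ref{notF4F5} as a direct corollary of Lemmas \ref{F4} and \ref{F5} together with Lemma \ref{prop Bi}.2, which asserts that $\tilde{\mathscr{B}}_3$ is closed under taking induced subdigraphs. First I would identify, for each of $F_4$ and $F_5$ (as drawn in Figure \ref{order3}), which of the two auxiliary patterns from the previous lemmas it contains as an induced subdigraph. Concretely, I expect that the pattern $H$ of Figure \ref{FigHandH'} (the one handled in Lemma \ref{F4}) is an induced subdigraph of $F_4$ — or possibly equals $F_4$ up to isomorphism after discarding a true twin — and similarly the pattern $H$ of Figure \ref{FigLemma6} (handled in Lemma \ref{F5}) is an induced subdigraph of $F_5$. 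This matching of vertices and arcs is the one genuinely content-bearing step: it amounts to exhibiting an explicit injection $V_H \to V_{F_i}$ under which the arc set of $F_i$ restricted to the image is exactly the image of the arc set of $H$.

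Second, once the induced-subdigraph relations are established, I would invoke the contrapositive of Lemma \ref{prop Bi}.2: since $H \notin \tilde{\mathscr{B}}_3$ (by Lemma \ref{F4}, resp. Lemma \ref{F5}) and $H$ is an induced subdigraph of $F_4$ (resp.\ $F_5$), it follows immediately that $F_4 \notin \tilde{\mathscr{B}}_3$ (resp.\ $F_5 \notin \tilde{\mathscr{B}}_3$), i.e.\ neither $F_4$ nor $F_5$ is a panchromatic pattern by paths. If instead the patterns of the two figures turn out not to be contained in $F_4$, $F_5$ but rather to contain them, one would use Lemma \ref{true twins} (blowing up a vertex to a pair of true twins) to bridge the gap; but I anticipate the cleaner induced-subdigraph route works, since the auxiliary patterns were evidently engineered to be small "cores'' of $F_4$ and $F_5$.

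The main obstacle — really the only non-routine point — is getting the correspondence between the figures right: the patterns $H$ in Figures \ref{FigHandH'} and \ref{FigLemma6} have named vertices $x,y,z,w$ with a specific arc structure, while $F_4$ and $F_5$ are presented pictorially in Figure \ref{order3} among the sixteen reflexive digraphs on three vertices, so one must carefully read off the non-loop arcs of each and check the isomorphism (or induced-subgraph embedding) rather than just assert it. Since this is bookkeeping over digraphs of order at most four, a short sentence naming the relevant vertex identification for each of $F_4$ and $F_5$, followed by the appeal to Lemma \ref{prop Bi}.2, suffices; no case analysis or construction is needed beyond what Lemmas \ref{F4} and \ref{F5} already provide.
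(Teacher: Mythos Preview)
Your primary route does not work, because you have the containment direction backwards. The auxiliary patterns $H$ in Lemmas \ref{F4} and \ref{F5} each have four vertices $x,y,z,w$, whereas $F_4$ and $F_5$ are listed among the reflexive digraphs on \emph{three} vertices. Hence $H$ cannot be an induced subdigraph of $F_i$; rather $F_i$ is (isomorphic to) an induced subdigraph of $H$, namely $H-w$, since $z$ and $w$ are true twins. Consequently the contrapositive of Lemma \ref{prop Bi}.2 is useless here: from $H\notin\tilde{\mathscr{B}}_3$ and $F_i\hookrightarrow H$ you cannot conclude $F_i\notin\tilde{\mathscr{B}}_3$; that lemma only propagates failure \emph{upwards} to superdigraphs, not downwards to induced subdigraphs.

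What actually carries the argument is exactly the fallback you mention and then dismiss: Lemma \ref{true twins}. Because $z$ and $w$ are true twins in $H$, that lemma gives the biconditional $H\in\tilde{\mathscr{B}}_3 \iff H_{zw}\in\tilde{\mathscr{B}}_3$, and $H_{zw}$ is precisely $F_4$ (for the $H$ of Lemma \ref{F4}) or $F_5$ (for the $H$ of Lemma \ref{F5}). Combined with $H\notin\tilde{\mathscr{B}}_3$ from those lemmas, this yields $F_4,F_5\notin\tilde{\mathscr{B}}_3$. This is exactly the paper's proof. So your proposal contains the right ingredient, but your bet on which of the two routes is the operative one is wrong, and the ``cleaner induced-subdigraph route'' is a dead end in this direction.
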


\begin{proof}
Note that $z$ and $w$ are true twins
in the pattern $H$ in Lemmas \ref{F4}
and \ref{F5}. Then, by Lemma
\ref{true twins}, the pattern $H$ is
a panchromatic pattern by paths if and
only if the pattern obtained from $H$
by contracting $z$ and $w$ is.   But
the pattern obtained by contracting
$z$ and $w$ is $F_4$ for $H$ in Lemma
\ref{F4} and $F_5$ for $H$ in Lemma
\ref{F5}.   It follows from Lemmas
\ref{F4} and \ref{F5} that $F_4$ and
$F_5$ are not panchromatic patterns
by paths.
\end{proof}


\section{The pattern $F_1$}
\label{sec:F1}

So far, for every pattern on three vertices
we know whether it is a panchromatic pattern
by paths, except for the pattern $F_1$.
Arpin and Linek proved in \cite{arpinDM307}
that $F_1$ is a panchromatic pattern (by
walks) using the concept of closure of a
coloured digraph.   Unfortunately, their
technique is neither directly applicable in
the path case, nor seems to be modifiable
to suit it.  One interesting fact that
should be pointed ot from Arpin and Linek's
technique is that they construct a
multidigraph (parallel arcs are present)
having the same reachability of a given
$F_1$-arc-coloured digraph, and where some
substructures always exist.   From here,
one may ask whether there might be an
$F_1$-arc-coloured multidigraph without an
$F_1$-kernel, while every $F_1$-arc-coloured
digraph (without parallel arcs) has an
$F_1$-kernel.   If so, then looking for a
counterexample would imply looking at
multidigraphs, which represents a broader
search space than just considering digraphs.
Fortunately, it is not the case, the main
result of this section implies that we can
only consider digraphs.

\begin{theorem} \label{F1multi}
If every $F_1$-arc-coloured digraph has an
$F_1$-kernel, then every $F_1$-arc-coloured
multidigraph has an $F_1$-kernel.
\end{theorem}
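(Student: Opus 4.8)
The plan is to reduce the multidigraph case to the digraph case by "subdividing" parallel arcs in a way that preserves $F_1$-reachability. Let me recall the structure of $F_1$: it is a reflexive digraph on three vertices, and from the discussion it is the unique order-three reflexive pattern whose status remains open; in particular, since Arpin--Linek showed it lies in $\mathscr{B}_3$ (by walks), every vertex is reflexive, and one can read off which non-loop arcs are present. The key feature I will exploit is that $F_1$ is reflexive, so for any colour $c \in V_{F_1}$ the pair $(c,c)$ is an arc of $F_1$; this means that replacing a single arc of colour $c$ by a directed path all of whose arcs have colour $c$ does not change whether a walk using that arc is an $F_1$-path (the internal vertices are new, so no new repetitions are introduced, and the colour-change conditions at the endpoints are unaffected).

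First I would set up the construction. Let $D$ be an $F_1$-arc-coloured multidigraph. Whenever two vertices $u,v$ are joined by $k \ge 2$ parallel arcs $a_1,\dots,a_k$ (possibly of different colours), I replace each $a_i$ other than, say, $a_1$ by a directed path $u \to p_i \to v$ where $p_i$ is a brand-new vertex, colouring both new arcs with $c(a_i)$. Doing this for every parallel class yields a genuine digraph $D'$ (no parallel arcs: between $u$ and $v$ we keep at most one direct arc, and each detour goes through a distinct private vertex). The second step is to verify the reachability correspondence: for $x,y \in V_D$, there is an $F_1$-path from $x$ to $y$ in $D$ if and only if there is one in $D'$. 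One direction replaces each used arc $a_i$ ($i\ge 2$) by its detour $u\,p_i\,v$, using reflexivity of the colour at the junction; the other direction contracts each detour back to the arc $a_i$, noting that any $F_1$-path in $D'$ visits each $p_i$ at most once and, when it does, enters and leaves via the two $c(a_i)$-coloured arcs, so contracting is legitimate and cannot shorten a path into a walk with a repeated vertex. In particular $D$ and $D'$ have exactly the same $F_1$-reachability relation on $V_D$.

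Third, I would transfer kernels. Suppose $D'$ has an $F_1$-kernel $K'$. I claim $K' \cap V_D$ is an $F_1$-kernel of $D$, after a small adjustment: note first that each private vertex $p_i$ has $c(a_i)$-coloured arcs to $v$ (so it is $F_1$-absorbed by $v$ via a single arc, since loops exist), hence one can always assume $K'$ contains no $p_i$ — if $p_i \in K'$, then since $p_i$ reaches $v$ by an $F_1$-path and $p_i$ can be reached from $u$, a short argument using independence forces structure that lets us swap $p_i$ out for $u$ or $v$; more cleanly, one shows directly that $K := (K' \setminus \hat P) \cup \{\,u : \text{some } p_i\text{ adjacent to }u\text{ lies in }K'\,\}$ works, exactly in the style of the set manipulations in Lemmas~\ref{F4} and~\ref{F5}. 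Then $K \subseteq V_D$, $K$ is $F_1$-independent in $D$ because any $F_1$-path in $D$ between two of its vertices lifts to one in $D'$ between two vertices of $K'$, and $K$ is $F_1$-absorbent because every vertex of $V_D \setminus K$ — being a vertex of $V_{D'}\setminus K'$ up to the swap — reaches $K'$ by an $F_1$-path in $D'$, which projects to an $F_1$-path in $D$ reaching $K$.

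The main obstacle I expect is the bookkeeping in the kernel-transfer step when private vertices $p_i$ land in $K'$: one must be careful that swapping $p_i$ for an endpoint does not destroy independence (two swapped endpoints might now reach each other) or absorbance, and this requires using the specific arc structure of $F_1$ — in particular which colour pairs are, and are not, arcs — much as Cases~1--4 in Lemmas~\ref{F4} and~\ref{F5} hinge on explicit neighbourhoods like $N^+_H(y)=V_H$. A cleaner route that sidesteps this is to argue that $K'$ can be chosen from the start to avoid all private vertices: run the kernel existence hypothesis not on $D'$ itself but observe that contracting each $p_i$ back is an operation under which the $F_1$-kernel property is inherited, so it suffices to find any $F_1$-kernel of $D'$ and then note the projection is forced into $V_D$ because every $p_i$ is dominated by a single arc into a vertex of $V_D$ that it does not reach back. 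Either way, the heart of the matter is the reachability equivalence between $D$ and $D'$, which is where reflexivity of $F_1$ is used essentially, and that part is routine once the construction is fixed.
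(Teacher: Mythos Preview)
Your reachability correspondence is fine and is indeed the easy half of the argument; the paper also subdivides the offending parallel arcs and uses reflexivity exactly as you do.  The genuine gap is the kernel-transfer step, and neither of your two fixes works.  Take $V_D=\{u,v,w\}$ with two parallel arcs $u\to v$ coloured $g$ and $b$, a $g$-arc $v\to u$, and a $b$-arc $u\to w$ (so $w$ is a sink and $\{w\}$ is the unique $F_1$-kernel of $D$).  In your $D'$ you subdivide the $b$-arc $u\to v$ through a new vertex $p$.  Then $p$ reaches only $v$ (because its only outgoing colour sequence is $b$ followed by $g$, which is the missing arc of $F_1$), while $u$ and $v$ both reach the sink $w$; hence the \emph{unique} $F_1$-kernel of $D'$ is $K'=\{w,p\}$.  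This kills your ``cleaner route'' outright: $v$ does reach $p$ back (via $v\to u\to p$), and no kernel of $D'$ avoids $p$.  Your swap also fails: replacing $p$ by $u$ gives $\{u,w\}$ and replacing it by $v$ gives $\{v,w\}$, and in both cases the first vertex reaches $w$ in $D$, so neither set is $F_1$-independent.  (Conversely, on the two-vertex multidigraph with only the first two arcs above, $K'=\{p\}$ is a kernel of $D'$ and $K'\cap V_D=\varnothing$, so simply intersecting with $V_D$ is not a universal fix either.)

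What the paper does differently, and what makes the transfer trivial, is to attach to each subdivision vertex $z_1$ a private sink $z_2$ via an arc coloured $g$.  Since the only in-arc of $z_1$ is coloured $b$ and $(b,g)\notin A_{F_1}$, the sink $z_2$ is reachable only from $z_1$; being a sink it lies in every kernel, which forces $z_1$ out of every kernel.  With all subdivision vertices thus excluded, $\tilde K\cap V_D$ is automatically an $F_1$-kernel of $D$.  The paper also uses the universality of $r$ to collapse any parallel class containing an $r$-arc to a single $r$-arc, so the gadget is only needed for the $\{g,b\}$ case.  The missing idea in your proposal is precisely this sink trick exploiting the one absent arc of $F_1$; without it the bookkeeping you anticipate is not merely tedious but actually unsound.
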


\begin{proof}
We will prove that for every
$F_1$-arc-coloured multidigraph $D$ there
is an $F_1$-arc-coloured digraph
$\widehat{D}$ such that $D$ has an
$F_1$-kernel if and only if $\widehat{D}$
has an $F_1$-kernel.   Let be $r, g$ and
$b$ the vertices of $F_1$, and assume that
the only missing arc in $F_1$ is $(b,g)$.

Let $A_D[x,y] = \setdef{f \in A_H}{f
\text{ has head } x \text{ and tail } y}$.
We construct $\widehat{D}$, an
$F_1$-arc-coloured digraph obtained from $D$
through the following modifications.
Whenever there are vertices $u$ and $v$
in $V_D$ with $|A_D[u,v]| \ge 2$, then
\begin{enumerate}
    \item If there is $f \in A_D[u,v]$
          with colour $r$, we replace
          $A_D[u,v]$ by only one arc
          from $u$ to $v$, $e_{uv}$,
          coloured $r$.

    \item If $A_D[u,v] = \set{f, f'}$ is
          such that $f$ is coloured $g$
          and $f'$ is coloured $b$, then
          we create new vertices $z_{1}$
          and $z_{2}$, add them to $V_D$,
          and create new arcs $(u, v)$,
          $(u, z_1)$, $(z_1, z_2)$ and
          $(z_1, v)$, with colours $g$,
          $b$, $g$ and $b$, respectively,
          add them to $A_D$ and delete
          $f$ and $f'$ from $A_D$.
\end{enumerate}

\begin{figure}[!htb]
\begin{center}
\includegraphics[scale=.8]{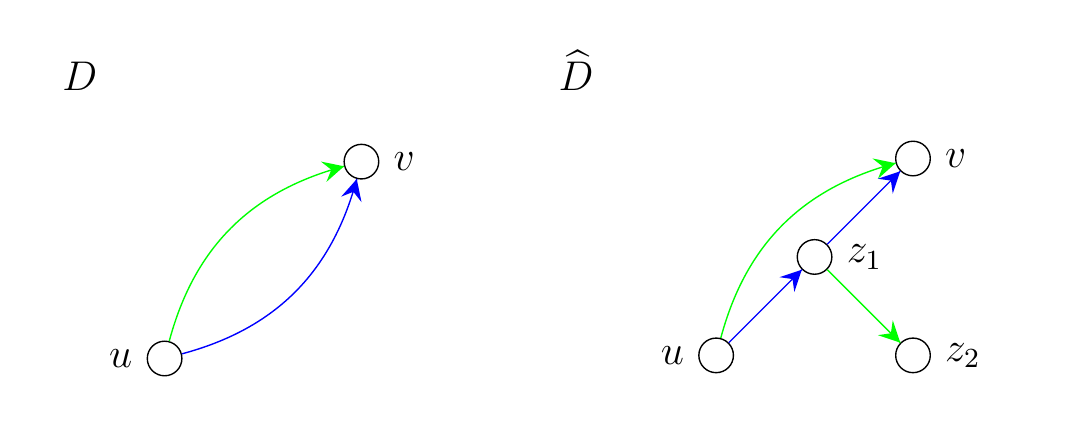}
\caption{The construction of $\widehat{D}$
in Lemma \ref{F1multi}.}
\label{DandhatD}
\end{center}
\end{figure}

Let $Z_i$ be the set of all new vertices
$z_i$ in $\widehat{D}$, $i \in \set{1, 2}$.

\textbf{Claim 1.} Let $x,y \in V_D$. There
is an $F_1$-path from $x$ to $y$ in $D$ if
and only if there is an $F_1$-path from $x$
to $y$ in $\widehat{D}$.

{\em Proof of Claim 1.} Let $W$ be the
$F_1$ path $W=(x=x_0,x_1,\ldots,x_n=y)$
in $D$. Construct $\widehat{W}$, an
$F_1$-path from $x$ to $y$ in $\widehat{D}$,
by performing the following modifications
on $W$. For each $i \in \set{0, 1, \ldots,
n-1}$ such that $|A_D[x_i, x_{i+1}]| \ge 2$,
we replace the arc from $x_i$ to $x_{i+1}$
used in $W$ with
\begin{enumerate}
    \item $e_{x_i x_{i+1}}$ coloured $r$,
          if there is $f \in A_D[x_i,
          x_{i+1}]$ coloured $r$,

    \item $(x_i, x_{i+1})$ coloured $g$,
          if there is no $f \in A_D[x_i,
          x_{i+1}]$ coloured $r$ and the
          arc from $x_i$ to $x_{i+1}$
          in $W$ is coloured $g$,

    \item $(x_i, z_1, x_{i+1})$ the
          monochromatic path coloured $b$,
          if there is no $f \in A_D[x_i,
          x_{i+1}]$ coloured $r$ and the
          arc from $x_i$ to $x_{i+1}$ in
          $W$ is coloured $b$.
\end{enumerate}

Since $W$ is an $F_1$-path in $D$, it is
enough to check that for each arc
substitution in $W$, the sequence of
colours of the new arcs are still a walk
in $F_1$. For each substitution of type 1,
$(x_{i-1}, x_i, x_{i+1}, x_{i+2})$ is an
$F_1$-path in $\widehat{D}$ because $N^+(r)
= N^-(r) = V(F_1)$ and $(x_i, x_{i+1})$
is coloured $r$ in $\widehat{W}$. For each
substitution of type 2, $(x_{i-1}, x_i,
x_{i+1}, x_{i+2})$ is an $F_1$-path in
$\widehat{D}$ because the arc from $x_i$
to $x_{i+1}$ in $W$ is coloured $g$ and
the new arc $(x_i, x_{i+1})$ in
$\widehat{W}$ is coloured $g$. For each
substitution of type 3, $(x_{i-1}, x_i,
z_1, x_{i+1}, x_{i+2})$ is an $F_1$-path
because the arc from $x_i$ to $x_{i+1}$
in $W$ is coloured $b$ and the path $(x_i,
z_1,x_{i+1})$ is monochromatic with colour
$b$ in $\widehat{D}$. Hence, $\widehat{W}$
is an $F_1$-path from $x$ to $y$ in
$\widehat{D}$.

Conversely, let $\widehat{W} = (x = x_0,
x_1, \ldots, x_n = y)$ be an $F_1$-path
from $x$ to $y$ in $\widehat{D}$ and
construct an $F_1$-path $W$ from $x$ to
$y$ in $D$ by modifying $\widehat{W}$ in
the following way. For each $i \in \set{1,
\ldots, n-1}$ such that there is $x_i
\in Z_1$, we replace the monochromatic
path $(x_{i-1}, z_1 = x_i, x_{i+1})$
coloured $b$ by the arc from $x_{i-1}$
to $x_{i+1}$ coloured $b$ in $D$, and
for each $i \in \set{0, 1, \ldots, n-1}$
such that $|A_D[x_i, x_{i+1}]| \ge 2$
different from those obtained by the
first modifications, we replace the arc
from $x_i$ to $x_{i+1}$ in $\widehat{W}$
with
\begin{enumerate}
    \item $f \in A_D[x_i, x_{i+1}]$
          coloured $r$ if the arc $x_i$
          to $x_{i+1}$ in $\widehat{W}$
          is $e_{x_i x_{i+1}}$ coloured
          $r$,

    \item the arc from $x_i$ to $x_{i+1}$
          coloured $g$ if there is no $f
          \in A_D[x_i, x_{i+1}]$ coloured
          $r$ and the arc from
          $x_i$ to $x_{i+1}$ in
          $\widehat{W}$ is coloured $g$.
 \end{enumerate}

Note that the above modifications change
a monochromatic path of colour $b$ into
an arc coloured $b$ and replace arcs by
other arcs with the same colour. Therefore,
it follows that $W$ is an $F_1$-path from
$x$ to $y$ in $D$. This ends the proof of
Claim 1.

Let $K$ be an $F_1$-kernel in $D$. We
consider $\widehat{K} = K \cup Z_2$,
we will prove that $\widehat{K}$ is an
$F_1$-kernel in $\widehat{D}$.
Notice that $\widehat{K} \cap Z_1 =
\varnothing$ and $Z_2$ is reached by
$F_1$-paths only by the vertices in
$Z_1$. Also, each vertex in $Z_2$
is a sink in $\widehat{D}$, so there
are no $F_1$-paths that start in $Z_2$.
By the $F_1$-independence of $K$ and
Claim 1, there is no $F_1$-path in
$\widehat{D}$ between vertices of $K$.
Since $Z_1 \cap K = \varnothing$ and the
observations at the beginning of this
paragraph, there are neither $F_1$-paths
in $\widehat{D}$ between vertices of $K$
and vertices of $Z_2$, nor between
vertices of $Z_2$. Hence $\widehat{K}$
is $F_1$-independent in $\widehat{D}$.

Let $x$ be a vertex in $V_{\widehat{D}}
- \widehat{K}$. If $x \in Z_1$, then
there is $z_2 \in Z_2$ such that $(x,
z_2) \in A_{\widehat{D}}$. If $x \in
V_D-K$, then there is $y\in K$ such
that there exist an $F_1$-path from $x$
to $y$ in $D$, by Claim 1 there is an
$F_1$-path from $x$ to $y$ in
$\widehat{D}$. Hence, $\widehat{K}$
is absorbent by $F_1$-paths in
$\widehat{D}$, and therefore
$\widehat{K}$ is an $F_1$-kernel in
$\widehat{D}$.

Let $\tilde{K}$ be an $F_1$-kernel in
$\widehat{D}$. Consider $K = \tilde{K}
\cap V_D$, we will prove that $K$ is
an $F_1$-kernel in $D$. Since every
vertex in $Z_2$ is a sink in
$\widehat{D}$, then $Z_2 \subset
\tilde{K}$ and consequently $Z_1
\cap \tilde{K} = \varnothing$,
moreover, the only vertices that
reach $Z_2$ by $F_1$-paths in
$\widehat{D}$ are the vertices in
$Z_1$. Since $\tilde{K}$ is
$F_1$-independent in $\widehat{D}$,
then by Claim 1, $K = \tilde{K}
\cap V_D$ is an $F_1$-independent
set in $D$.   Hence, if $x \in V_D
- K$, then by the $F_1$-absorbency
of $\tilde{K}$ in $\widehat{D}$, we
have that $K$ is an $F_1$-absorbent
set in $D$.   Therefore $K$ is an
$F_1$-kernel in $D$.

The contrapositive of the statement
of the theorem now follows directly
using the previous construction.
\end{proof}

The following corollary is immediate.

\begin{corollary} \label{F1}
If every $F_1$-arc-coloured digraph
(without parallel arcs) has an
$F_1$-kernel, then $F_1$ is a
panchromatic pattern by paths.
\end{corollary}

\section{Conclusions}
\label{sec:Conc}

To round up our comparisson between
reachability by $H$-paths and reachability
by $H$-walks, let us recall that Arpin and
Linek proved in \cite{arpinDM307} that there
is a pattern in $\mathscr{B}_2$ which is not
in $\mathscr{B}_3$.   It is easy to see that
the proof of this result is analogous for
paths (see Figure \ref{fig 7}).  So we obtain
the following result.

\begin{lemma}
The digraph $H$ such that $V_H = \set{u, u',
b, g}$ and $A(H_2) \subseteq V(H_2) - \set{
(b, u), (g, u'), (b, g), (g, b)}$ is not a
panchromatic pattern by paths.
\end{lemma}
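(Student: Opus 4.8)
The plan is to adapt Arpin and Linek's proof that $\mathscr{B}_2\neq\mathscr{B}_3$ to the path setting. For context, observe that $H$ is reflexive and that $H^c$ has arc set $\{(b,u),(g,u'),(b,g),(g,b)\}$, whose only directed cycle is the $2$-cycle on $\{b,g\}$; thus $H^c$ has no odd directed cycle and Theorem \ref{B2  characterization} gives $H\in\tilde{\mathscr{B}}_2$, so $H$ is a genuine candidate to separate the two classes. Since $H$ is not transitive ($b\to u'\to u$ but $(b,u)\notin A_H$), contains none of $F_4,F_5$ as an induced subdigraph (its three-vertex induced subdigraphs are two copies of $K_2\bullet K_1$ and two copies of $F_1$), and does not admit a walk of the form required by the hypotheses of Lemma \ref{arpLinNotB3}, the easy tools do not dispose of it, and we must exhibit an explicit $H$-arc-coloured digraph $D$ with no $H$-kernel — the digraph of Figure \ref{fig 7}.

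The construction of $D$ exploits the forbidden transitions of $H$: an $H$-walk can be continued past a $b$-coloured arc only by a $b$- or $u'$-coloured arc, and past a $g$-coloured arc only by a $g$- or $u$-coloured arc; in particular $b$ and $g$ are never consecutive, the unique ``$b$-to-$g$ bridge'' is a $u'$-arc, and the unique ``$g$-to-$b$ bridge'' is a $u$-arc. One builds $D$ around a small cyclic skeleton coloured cyclically by $b$ and $g$ with these bridge-arcs inserted, equipped with Arpin and Linek's ``bounce'' gadgets, which in the walk setting force the independent $H$-absorbent set guaranteed by $H\in\tilde{\mathscr{B}}_2$ to contain two vertices linked by an $H$-walk of length $\ge 2$. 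For the path version each bounce gadget is \emph{unfolded} — the bounced vertex is replaced by distinct copies carrying the appropriate colours — so that the offending connection becomes an $H$-\emph{path}. The colouring is then arranged so that (i) no single vertex of $D$ $H$-absorbs all of the others, and (ii) every independent $H$-absorbent set of $D$ contains a pair of vertices joined by an $H$-path of length $\ge 2$, hence is not $H$-independent; together (i) and (ii) preclude an $H$-kernel, so $H\notin\tilde{\mathscr{B}}_3$.

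The proof concludes with a finite check: enumerate the maximal independent sets of $D$, discard those that are not $H$-absorbent (using the $H$-path reachability on $D$, which can be read off directly from the colouring together with the neighbourhoods $N^+_H$ and $N^-_H$ recorded above), and for each remaining candidate display the internal $H$-path promised by (ii). The point that needs real care — and the only place where the path argument departs from the walk argument — is verifying the unfolding: one must confirm both that every $H$-walk joining the two critical vertices of a candidate set is actually realized by an $H$-path inside $D$ (so that the added copies genuinely carry the length-$\ge 2$ connection), and that the added copies create no \emph{unwanted} short $H$-path that would let some vertex escape being forced into the set. Once the gadget is laid out correctly, the remaining case analysis is routine checking against $A_H$.
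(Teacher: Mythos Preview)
Your approach matches the paper's: exhibit an explicit $H$-arc-coloured digraph $D$ with no $H$-kernel, adapted from Arpin and Linek's example separating $\mathscr{B}_2$ from $\mathscr{B}_3$. The paper's entire proof is the sentence ``It is easy to see that the proof of this result is analogous for paths (see Figure~\ref{fig 7})'', so strategically you are doing the same thing.

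That said, what you have written is a \emph{plan} for a proof, not a proof. You never actually write down $D$ (its vertex set, arc set, and colouring), you never list the candidate independent $H$-absorbent sets, and you never display the internal $H$-paths that kill each one. Phrases like ``the remaining case analysis is routine checking against $A_H$'' and ``once the gadget is laid out correctly'' are exactly the steps that constitute the proof; without them nothing has been shown. The paper gets away with this only because it supplies the figure and defers to \cite{arpinDM307}; a self-contained write-up must give the digraph and do the check.

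There is also a substantive point on which you may diverge from the paper. You assert that the Arpin--Linek ``bounce'' gadgets must be \emph{unfolded} so that the relevant $H$-walks become $H$-paths. The paper, by contrast, says the argument is simply ``analogous'' and reuses (the path version of) the same picture. Before committing to an unfolding, you should check whether the $H$-walks used in Arpin and Linek's verification are already $H$-paths in their digraph; if so, no modification is needed and your added copies are superfluous (and, as you yourself warn, could introduce unwanted short $H$-paths). Either way, this is precisely the place where hand-waving is dangerous: write the digraph down and verify.
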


\begin{figure}[!htb]
\includegraphics[scale=.8]{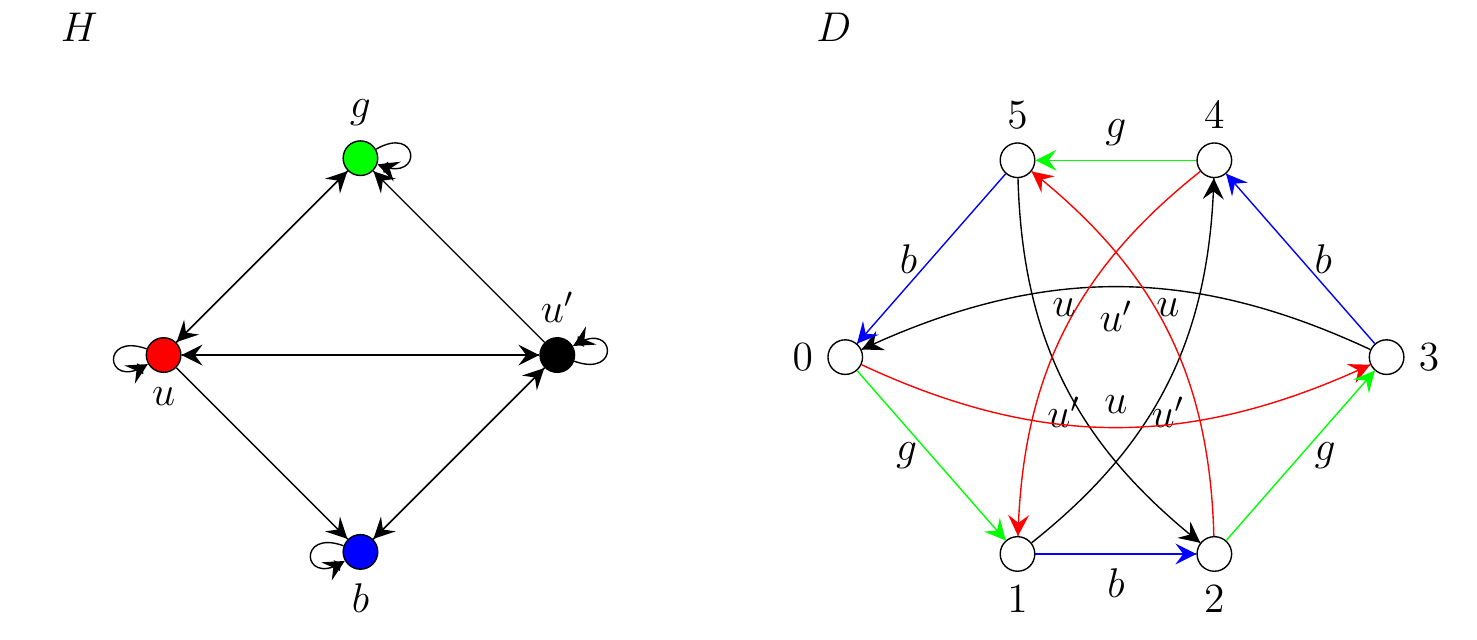}
\caption{$H \in \tilde{\mathscr{B}}_2
         - \tilde{\mathscr{B}}_3$}
\label{fig 7}
\end{figure}

Observe that $H_2^c$ has no odd cycles,
then $H_2 \in \tilde{\mathscr{B}}_2 -
\tilde{\mathscr{B}}_3$. Therefore
$\tilde{\mathscr{B}}_3 \subset
\tilde{\mathscr{B}}_2$.

So, we know exactly which of the reflexive
patterns on three vertices are panchromatic
patterns by paths, except for $F_1$.
Despite the missing pattern, we have enough
information to state the following theorem.

\begin{theorem}
Let $H$ be a pattern. Then $H$ is a
panchromatic pattern by paths if and only if
$V_H$ admits a partition $V_H = (V_1, V_2)$
such that $V_1$ and $V_2$ induce reflexive
complete digraphs (every symmetric arc is
present) and exactly one of the following
holds
\begin{itemize}
    \item There are no arcs between vertices
          in $V_1$ and vertices in $V_2$.

    \item Every vertex in $V_1$ dominates
          every vertex in $V_2$, and no
          vertex in $V_2$ dominates a vertex
          in $V_1$ (if $F_1$ is not a
          panchromatic pattern by paths).

    \item Every vertex in $V_1$ dominates
          every vertex in $V_2$ and vertices
          in $V_2$ may dominate vertices in
          $V_1$ (if $F_1$ is a panchromatic
          pattern by paths).
\end{itemize}
\end{theorem}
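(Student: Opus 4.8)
The plan is to prove both implications of the characterization by leveraging the classification of the sixteen reflexive patterns on three vertices established in Sections~\ref{sec:ppp}--\ref{sec:F1}, together with the structural closure properties of $\tilde{\mathscr{B}}_3$ from Lemma~\ref{prop Bi} and Lemma~\ref{true twins}, and the behaviour of $\bullet$ with respect to $\tilde{\mathscr{B}}_2$ from Lemma~\ref{sumB2}. First I would dispose of the necessity direction. Suppose $H$ is a panchromatic pattern by paths. By Lemma~\ref{prop Bi}.1, $H$ is reflexive, and by Lemma~\ref{prop Bi}.2 every induced subdigraph of $H$ is again in $\tilde{\mathscr{B}}_3$; in particular $H$ cannot contain any of the forbidden order-three patterns $3K_1$, $K_1\bullet 2K_1$, $2K_1\bullet K_1$, $(K_1\bullet K_1)+K_1$, $T_3$, as well as $F_2$, $F_3$ and the four patterns of Figure~\ref{fig 5}, since none of these lies in $\tilde{\mathscr{B}}_3$ (and by Lemma~\ref{notF4F5} neither do $F_4$ nor $F_5$, though whether $F_1$ is excluded is exactly the open point). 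A short combinatorial argument — essentially the same one used for $\mathscr{B}_3$ in \cite{galeanaDM339,galeanaJGT90} — shows that a reflexive digraph avoiding all of these induced substructures must have its complement with no odd cycle (so the partition $(V_1,V_2)$ into reflexive complete pieces exists by the bipartiteness argument of Theorem~\ref{B2 characterization}), and moreover the arcs between $V_1$ and $V_2$ must be "uniform": either absent, or all going from $V_1$ to $V_2$ (with back-arcs permitted only in the $F_1$-case). Forbidding $T_3$ and $2K_1\bullet K_1$ rules out any mixed orientation of the $V_1$–$V_2$ arcs, and forbidding $F_2,F_3$ and the patterns of Figure~\ref{fig 5} pins down exactly which of the three listed alternatives can occur; the $F_1$-dependent clause is where the case distinction enters.

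For the sufficiency direction I would argue that any $H$ admitting such a partition belongs to $\tilde{\mathscr{B}}_3$ (conditioned, in the third bullet, on $F_1\in\tilde{\mathscr{B}}_3$). In the first case, $H = (V_1\text{-clique}) \oplus (V_2\text{-clique})$ is a disjoint union of two complete reflexive digraphs; since a single complete reflexive digraph is in $\tilde{\mathscr{B}}_3$ and, by the Sands–Sauer–Woodrow result cited in the introduction, two reflexive isolated vertices form a pattern in $\tilde{\mathscr{B}}_3$, one extends this to the disjoint union of two cliques by the true-twins Lemma~\ref{true twins} (collapsing each clique to a single reflexive vertex). In the second case, $H$ is (isomorphic to a pattern built by) $\bullet$ of two complete reflexive digraphs; using Lemma~\ref{true twins} to collapse each side to a vertex reduces $H$ to $K_1\bullet K_1 = K_2\bullet K_1$-type patterns, which are transitive and, being in $\mathscr{B}_3$, lie in $\tilde{\mathscr{B}}_3$ by Proposition~\ref{transitiveCase} — the general case then follows by blowing the twin-vertices back up (the "if" direction of Lemma~\ref{true twins}). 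In the third case the reduced pattern is exactly $F_1$ (two reflexive cliques, all arcs one way, some arcs the other way, collapses via true twins to $F_1$), so $H\in\tilde{\mathscr{B}}_3$ iff $F_1\in\tilde{\mathscr{B}}_3$, which is precisely the parenthetical hypothesis.

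The main obstacle I anticipate is the necessity direction: one must verify that the absence of every \emph{induced} three-vertex obstruction forces the claimed global structure, and this requires a careful case analysis of how arcs can run between two reflexive complete pieces — in particular showing that no "partial" orientation of the $V_1$–$V_2$ arcs survives, and that the two non-trivial arc-patterns are mutually exclusive with the no-arc case. This is the analogue of the structural lemma from \cite{galeanaDM339} that was found to be flawed for walks, so I would take special care here; fortunately, for the \emph{path} version the reductions via Lemma~\ref{true twins} and the clean three-vertex classification (Sections~\ref{sec:ppp}--\ref{sec:F4F5}) make the argument go through, with the single genuinely undecided input being the status of $F_1$.
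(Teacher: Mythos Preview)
The paper's own proof is a two-line deferral to \cite{galeanaJGT90} and \cite{hellArXiv}, so there is no in-paper argument to compare against; your sketch is a reasonable reconstruction of what those references do, and for the necessity direction and for the first two bullets of the sufficiency direction your use of Lemma~\ref{prop Bi}, Proposition~\ref{transitiveCase}, and Lemma~\ref{true twins} is exactly the intended machinery.

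There is, however, a genuine gap in your sufficiency argument for the third bullet. You assert that a pattern $H$ with $V_1,V_2$ reflexive complete, all arcs from $V_1$ to $V_2$ present, and \emph{some} back-arcs ``collapses via true twins to $F_1$''. This is false in general. Take $V_1=\{a_1,a_2\}$, $V_2=\{b_1,b_2\}$, with back-arcs $b_1\to a_1$ and $b_2\to a_2$ only. Then $N^-(a_1)\ne N^-(a_2)$ (they receive different back-arcs from $V_2$) and $N^+(b_1)\ne N^+(b_2)$ (they send different back-arcs into $V_1$), so $H$ has \emph{no} pair of true twins and Lemma~\ref{true twins} yields no reduction whatsoever; the pattern stays on four vertices and is certainly not $F_1$. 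More generally, an arbitrary bipartite back-arc pattern between $V_1$ and $V_2$ can make the true-twin quotient as large as $H$ itself. Thus the third bullet cannot be handled by twin-contraction alone; the argument in \cite{galeanaJGT90} for this case proceeds by a direct construction (building an auxiliary digraph whose ordinary kernels correspond to $H$-kernels of $D$), and that step is where the hypothesis $F_1\in\tilde{\mathscr{B}}_3$ is actually used. Your proposal would need to import or reproduce that argument rather than rely on Lemma~\ref{true twins}.
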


\begin{proof}
If $F_1$ is a panchromatic pattern by paths,
the proof of this result is found in
\cite{galeanaJGT90}.   Otherwise, the proof
is analogous (cf. \cite{hellArXiv}).
\end{proof}

We would like to point out two open problems
regarding panchromatic patterns by paths.

\begin{problem} \label{prob:F1}
Determine whether $F_1$ is a panchromatic
pattern by paths.
\end{problem}

\begin{problem} \label{prob:F4F5}
Find an $H$-arc-coloured digraph without
an $H$-kernel for $H \in \set{F_4, F_5}$.
\end{problem}

A solution to problem \ref{prob:F1} would
settle the characterization of panchromatic
patterns by paths, and Corollary \ref{F1}
guarantees that we can restrict either a
proof or the search for an example to
$F_1$-arc-coloured digraphs (without
parallel arcs).   Regarding problem,
\ref{prob:F4F5}, recall that the proofs of
Lemmas \ref{F4} and \ref{F5} are not
constructive, so, we do not have a single
example of an $H$-arc-coloured with the
desired property.   It would be really
nice to see such and example, for the sake
of completness of the subject.


\begin{thebibliography}{}
    \bibitem{arpinDM307}
        P. Arpin and V. Linek,
        Reachability problems in edge coloured digraphs,
        Discrete Math. 307 (2007) 2276--2289.

    \bibitem{bang2002}
      	J.~Bang-Jensen and G.~Gutin,
      	Digraphs.  Theory, Algorithms and Applications.
        Springer-Verlag, 2002.

    \bibitem{beaudouDM342}
        L.~Beaudou, L.~Devroye and G.~Hahn,
        A lower bound on the size of an absorbing set
        in an arc-coloured tournament,
        Discrete Mathematics 342 (2019) 143--144.

    \bibitem{bondy2008}
        J.~A.~Bondy and U.~S.~R.~Murty,
        Graph Theory,
        Springer-Verlag, London, 2008.

    \bibitem{bousquetArXiv}
        N.~Bousquet, W.~Lochet and S.~Thomass\'e,
        A proof of the Erd\"os-Sands-Sauer-Woodrow conjecture,
        2017, arXiv:1703.08123v1.

    \bibitem{delgadoDAM236}
        P.~A.~Delgado-Escalante, H.~Galeana-S\'anchez
        and E.~O'Reilly-Regueiro,
        Alternating kernels,
        Discrete Applied Mathematics 236 (2018) 153--164.

    \bibitem{galeanaJGT90}
        H. Galeana-S\'anchez and C. Hern\'andez-Cruz,
        A dichotomy for the kernel by $H$-walks problem in digraphs,
        Journal of Graph Theory 90(3) (2019) 213--226.

    \bibitem{galeanaDM339}
        H. Galeana-S\'anchez and R. Strausz,
        On panchromatic patterns,
        Discrete Math. 339 (2016) 2536--2542.

    \bibitem{hahnDM283}
        G.~Hahn, P.~Ille and R.~Woodrow,
        Absorbing sets in arc-coloured tournaments,
        Discrete Math. 283 (2004) 93--99.

    \bibitem{hellArXiv}
        P.~Hell and C.~Hern\'andez-Cruz,
        Minimal digraph obstructions for small matrices,
        arXiv (2016).

    \bibitem{linekAC44}
        V. Linek and B. Sands,
        A note on paths in edge-coloured tournaments,
        Ars Combinatoria 44 (1996) 225--228.

    \bibitem{sandsJCTB33}
        B. Sands, N. Sauer and R. Woodrow,
        On monochromatic paths in edge coloured digraphs,
        Journal of Combinatorial Theory, Series B 33 (1982) 271--275.
\end{thebibliography}
\end{document}